\newtheorem{tm}{Theorem}
\newtheorem{defi}{Definition}
\newtheorem{rem}{Remark}
\newtheorem{rems}{Remarks}
\newtheorem{lm}{Lemma}
\newtheorem{ex}{Example}
\newtheorem{problem}{Problem}
\newtheorem{prop}{Proposition}
\newtheorem{nota}{Notation}
\begin{document}

\title{Degree 5 polynomials and Descartes' rule of signs}
\author{Hassen Cheriha, Yousra Gati and Vladimir Petrov Kostov}
\address{Universit\'e C\^ote d'Azur, LJAD, France and 
University of Carthage, EPT - LIM, Tunisia}
\email{hassen.cheriha@gmail.com, hassan.cheriha@unice.fr}
\address{University of Carthage, EPT - LIM, Tunisia}
\email{yousra.gati@gmail.com} 
\address{Universit\'e C\^ote d'Azur, LJAD, France} 
\email{vladimir.kostov@unice.fr}

\begin{abstract}
For a univariate real polynomial without zero coefficients, Descartes' rule of signs (completed by an observation of Fourier) says that its numbers $pos$ of positive and $neg$ of negative roots (counted with multiplicity) are majorized respectively by the numbers $c$ and $p$ of sign changes and sign preservartions in the sequence of its coefficients, and that the differences $c-pos$ and $p-neg$ are even numbers. For degree 5 polynomials, it has been proved by A.~Albouy and Y.~Fu that there exist no such polynomials having three distinct positive and no negative roots and whose signs of the coefficients are $(+,+,-,+,-,-)$ (or having three distinct negative and no positive roots and whose signs of the coefficients are $(+,-,-,-,-,+)$). For degree 5 and when the leading coefficient is positive, these are all cases of numbers of positive and negative roots (all distinct) and signs of the coefficients which are compatible with Descartes' rule of signs, but for which there exist no such polynomials. We explain this non-existence and the existence in all other cases with $d=5$ by means of pictures showing the discriminant set of the family of polynomials $x^5+x^4+ax^3+bx^2+cx+d$ together with the coordinate axes.

{\bf Key words:} real polynomial in one variable; sign pattern; Descartes' 
rule of signs\\

\end{abstract}
\maketitle 

\section{Introduction}

Consider a univariate real polynomial $P(x):=\sum _{j=0}^da_jx^j$, $a_d\neq 0$, with $c$ sign changes in the sequence of its coefficients. The classical Descartes' rule of signs says that the number $pos$ of its positive roots is not larger than $c$, 
see~\cite{VJ}. Fourier (see~\cite{Fo}) has observed also that if roots are counted with multiplicity, then the number $c-pos$ is even. In the present paper we consider polynomials with all coefficients nonzero. In this case if one considers the polynomial $P(-x)$ and applies Descartes' rule to it, one finds that for the number $neg$ of 
negative roots of $P$, (counted with multiplicity) one obtains $neg\leq p$, where $p$ is the number of sign preservations in 
the sequence of coefficients (hence $c+p=d$); moreover, the number $p-neg$ is even. Descartes' rule of signs gives only necessary conditions about the possible values of the numbers $pos$ and $neg$ when the numbers $c$ and $p$ are known. To explain what sufficient conditions means we need the following definition:

\begin{defi}
{\rm For a given degree $d$, a {\em sign pattern (SP)} is a sequence of $d+1$ signs ($+$ or $-$). We assume the first of them to be a $+$, because without loss of generality we consider only monic polynomials. Given the degree $d$ and a SP, we denote by $c$ and $p$ the numbers of sign changes and sign preservations in the SP and we call the pair $(c,p)$ {\em Descartes' pair}. Any pair $(pos, neg)$ satisfying the conditions}

\begin{equation}\label{eqconditions}
pos\leq c~~~,~~~c-pos\in 2\mathbb{Z}~~~,~~~neg\leq p~~~,~~~p-neg\in 2\mathbb{Z}
\end{equation}

\noindent {\rm is called {\em admissible pair (AP)} for the given SP. In particular, the Descartes' pair is an AP.  A given couple (SP, AP) is {\em realizable} if there exists a monic degree $d$ polynomial the signs of whose coefficients define the given SP and which has exactly $pos$ positive and exactly $neg$ negative roots, all of them simple.}
\end{defi}
To give sufficient conditions in the context of Descartes' rule of signs means to give the answer to the following realization problem:

\begin{problem}\label{problem1}
For a given degree $d$, which couples (SP, AP) are realizable and which are not?
\end{problem}

The answer to this problem is known for $d\leq 8$. For $d\leq 3$, all couples (SP, AP) are realizable. For $d=4$, the answer to it is due to D.~Grabiner, see~\cite{Gr}, for $d=5$ and $6$ it is due to A.~Albouy and Y.~Fu, see~\cite{AlFu}, and for $d=7$ and $8$, it was given by J.~Forsg\aa rd, V.~P.~Kostov and B.~Z.~Shapiro, see~\cite{FoKoSh} and \cite{Czechoslovak}. 

\begin{rem} \label{remgen}
{\rm In order to reduce the number of couples (SP, AP) to be considered one can use the following  {\em $\mathbb{Z}_2\times 
\mathbb{Z}_2$-action}. Its first generator $g_1$ changes a given polynomial $P(x)$ to $(-1)^dP(-x)$ thereby changing every second sign of the SP and replacing the AP $(pos, neg)$ by the AP $(neg, pos)$. The second generator  $g_2$ changes $P(x)$ to $P^R(x):=x^dP(1/x)/P(0)$ which means reading the SP backward and preserving the AP (the roots of the {\em reverted polynomial} $P^R$ are the reciprocals of the roots of~$P$). The generators $g_1$ and $g_2$ are commuting involutions. Given a couple (SP, AP) (denoted by $\lambda$), the couples $\lambda$ and $g_1(\lambda )$ are always different, because the second signs of their SPs are different, but one might have $g_2(\lambda )=\lambda$ or $g_1g_2(\lambda )=\lambda$.

Thus orbits of the $\mathbb{Z}_2\times \mathbb{Z}_2$-action consist of $4$ or 
$2$ couples (SP, AP). E.g.
for $d=2$, one has the orbit $((+,-,-),(1,1))$, $((+,+,-),(1,1))$ of 
length $2$; for $d=3$, the orbit

$$\begin{array}{ccc}((+,+,+,-)~,~(1,2))~,&&((+,-,+,+)~,~(2,1))~,\\ \\
   ((+,-,-,-)~,~(1,2))~,&&((+,+,-,+)~,~(2,1))\end{array}$$
is of length~$4$. It is clear that all $4$ or $2$ couples (SP, AP) of a 
given orbit are simultaneously
(non)realizable.}
\end{rem}

\noindent In each of the cases $d=4$ and $d=5$ there is exactly one example of non-realizability of a couple (SP, AP) modulo the $\mathbb{Z}_2\times \mathbb{Z}_2$-action, namely

\begin{equation}\label{eqGrAlFu}
(~(+,+,-,+,+)~,~(2,0)~)~~~\, \, {\rm and}~~~\, \, 
(~(+,+,-,+,-,-)~,~(3,0)~)~,
\end{equation}
\noindent see~\cite{Gr} and~\cite{AlFu} respectively. For each of these two couples (SP, AP) one has $g_2(\lambda)=\lambda$, see Remark~\ref{remgen}, so they define orbits of length 2. For $d$ = 6, 7 and 8, there are respectively 4, 6 and 19 non-realizable cases modulo the $\mathbb{Z}_2\times \mathbb{Z}_2$-action, see~\cite{AlFu}, \cite{FoKoSh} and~\cite{Czechoslovak}.  

\begin{prop}\label{prophowmany}
For $d=5$, there are $22$ realizable and no non-realizable orbits of the $\mathbb{Z}_2\times \mathbb{Z}_2$-action of length $4$ and $13$ realizable and one non-realizable orbits of length~$2$.
\end{prop}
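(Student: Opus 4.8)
The plan is to separate the purely combinatorial counting of orbits (by length) from the realizability question, which is entirely supplied by the theorem of Albouy and Fu quoted above. First I would count the total number $N$ of couples (SP, AP). A degree-$5$ SP is determined by its $5$ signs following the leading $+$, and the number of sign changes $c$ equals the number of the $5$ consecutive pairs at which the sign is reversed; hence there are $\binom{5}{c}$ patterns with a prescribed $c$, and $p=5-c$. For a fixed Descartes' pair $(c,p)$ the admissible values of $pos$ are $c,c-2,\dots$, i.e. $\lfloor c/2\rfloor+1$ of them, and likewise there are $\lfloor p/2\rfloor+1$ values of $neg$. Summing $\binom{5}{c}(\lfloor c/2\rfloor+1)(\lfloor p/2\rfloor+1)$ over $c=0,\dots,5$ gives $N=116$.

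Next I would determine the orbit structure. Since $g_1$ always alters the second entry of the SP, it fixes no couple, so the stabilizer of a couple is $\{1\}$, $\{1,g_2\}$ or $\{1,g_1g_2\}$, and an orbit has length $2$ exactly when the stabilizer is nontrivial. Writing the SP as $(t_0,\dots,t_5)$ with $t_0=+$, the reverting map $g_2$ sends $t_i\mapsto t_{5-i}t_5$, so $g_2$ fixes the SP precisely when it is palindromic ($t_5=+$, whence $t_1=t_4$, $t_2=t_3$) or anti-palindromic ($t_5=-$, whence $t_1=-t_4$, $t_2=-t_3$); this yields the $4+4=8$ patterns $(+,+,+,+,+,+)$, $(+,+,-,-,+,+)$, $(+,-,+,+,-,+)$, $(+,-,-,-,-,+)$ and $(+,+,+,-,-,-)$, $(+,+,-,+,-,-)$, $(+,-,+,-,+,-)$, $(+,-,-,+,+,-)$, with $c=0,2,4,2,1,3,5,3$ respectively. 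The decisive point, which makes the odd-degree count clean, is that $g_1g_2$ fixes \emph{no} SP whatsoever: at the last position the composite sends $t_5$ to $(-1)^5 t_0 t_5=-t_5\neq t_5$. Hence every length-$2$ orbit is necessarily of type $\{1,g_2\}$.

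Because $g_2$ preserves the AP, a couple is $g_2$-fixed iff its SP is one of the $8$ patterns above (paired with any of its admissible pairs), and summing their AP-counts $3,4,3,4,3,4,3,4$ gives $28$ such couples. As both members $\lambda$, $g_1\lambda$ of a length-$2$ orbit are $g_2$-fixed, there are $28/2=14$ orbits of length $2$ and $(116-28)/4=22$ of length $4$. The realizability count then follows from the quoted results: by Albouy and Fu every degree-$5$ couple is realizable except the orbit of $((+,+,-,+,-,-),(3,0))$, whose SP is exactly the anti-palindromic pattern fixed by $g_2$ and which therefore sits in a length-$2$ orbit. Thus all $22$ length-$4$ orbits are realizable, and of the $14$ length-$2$ orbits exactly one is non-realizable, leaving $13$, as claimed.

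I expect the only genuine difficulty to lie in the bookkeeping of the second and third steps, and in particular in handling correctly the renormalization built into the definitions of $g_2$ and $g_1g_2$: it is precisely this sign normalization that produces the parity obstruction $-t_5=t_5$ excluding all $g_1g_2$-fixed patterns, and getting it wrong would spoil both the list of $8$ fixed SPs and the clean split $116=4\cdot 22+2\cdot 14$. Everything else reduces to the elementary admissible-pair count and the direct citation of Albouy and Fu.
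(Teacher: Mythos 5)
Your proof is correct, but it takes a genuinely different route from the paper's. The paper works only with SPs beginning with $(+,+)$ (i.e., with half-orbits, the other halves being their $g_1$-images beginning with $(+,-)$) and \emph{constructively enumerates} every orbit in the quadrant notation $\sigma_{i,j}$: the pairings $g_2(\sigma_{2,1})=\sigma_{4,1}$ and $g_2(\sigma_{1,3})=\sigma_{3,3}$ yield $4+3=7$ orbits of length $4$; the pairings $g_1g_2(\sigma_{1,2})=\sigma_{4,4}$, $g_1g_2(\sigma_{2,2})=\sigma_{1,4}$, $g_1g_2(\sigma_{3,2})=\sigma_{2,4}$, $g_1g_2(\sigma_{4,2})=\sigma_{3,4}$ yield $4+3+4+4=15$ more; and the four $g_2$-fixed SPs $\sigma_{1,1}$, $\sigma_{2,3}$, $\sigma_{3,1}$, $\sigma_{4,3}$ yield $3+4+4+3=14$ orbits of length $2$. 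You instead compute the global total $N=116$ of couples, characterize the $g_2$-fixed SPs structurally (palindromic and anti-palindromic, $8$ SPs carrying $28$ couples), prove by a parity argument that $g_1g_2$ fixes no SP (a fact the paper never states, since its exhaustive enumeration makes it unnecessary), and get $14=28/2$ and $22=(116-28)/4$ by orbit--stabilizer arithmetic rather than by listing. The two computations are consistent: your four $g_2$-fixed SPs beginning with $(+,+)$ are exactly the paper's $\sigma_{1,1},\sigma_{3,1},\sigma_{4,3},\sigma_{2,3}$, and your $116$ equals twice the paper's count $13+15+15+15=58$ of couples with SP beginning $(+,+)$. What your approach buys: it is shorter on bookkeeping and generalizes cleanly to any odd degree, since the obstruction $-t_5\neq t_5$ excluding $g_1g_2$-fixed SPs is degree-parity driven. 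What the paper's approach buys: the explicit inventory of orbits in the $\sigma_{i,j}$ notation, which is precisely what the figures and the case numbering in the rest of the paper rely on. Both proofs handle realizability identically, by citing Albouy--Fu and observing that the unique non-realizable couple $((+,+,-,+,-,-),(3,0))$ is $g_2$-fixed and therefore sits in a length-$2$ orbit.
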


\noindent The proposition is proved in Section~\ref{secpictures} after Remarks~\ref{remscase}.

In \cite{KostovShapiroActaUMB} the {\em discriminant set} of the family of polynomials $x^4+x^3+ax^2+bx+c$ is represented (i.e. the set of values of the triple $(a,b,c)$ for which the polynomial has a multiple real root) and thus the non-realizability of the first of the two cases (\ref{eqGrAlFu}) is explained geometrically. In the present paper we give such an explanation for the non-realizability of the second of these cases and of the realizability of all other cases with $d=5$.
%To reduce the number of parameters we use only the first of the generators of the $\mathbb{Z}_2\times \mathbb{Z}_2$-action, because the action has orbits of different lengths (4 or 2). Thus 
One can assume that the first two signs of the SP are $(+,+)$. Recall that $a_d=1$. The change $P(x)\mapsto P(a_{d-1}x)/a_{d-1}^d$  transforms $P(x)$ into $x^d+x^{d-1}+\cdots$, i.e. one can normalize the first two coefficients. So we consider the 4-parameter family of polynomials 
\begin{equation}\label{eqP}
P(x,a,b,c,d):=x^5+x^4+ax^3+bx^2+cx+d~,
\end{equation}
with $a$, $b$, $c$, $d\in \mathbb{R}$. We denote by $\Delta$ the {\em discriminant set}
\begin{equation}\label{setRD}
\Delta ~:=~\{ ~(a, b, c, d)\in \mathbb{R}^4~|~{\rm Res}(P, \partial P/\partial x, x)=0~\} ~.
\end{equation}
A polynomial of the family $P$ has a multiple real root exactly if $(a,b,c,d)\in \Delta$. 
Our aim is to explain by means of pictures of the set $\Delta$ why the second of the cases (\ref{eqGrAlFu}) is not realizable. These pictures are given in Section~\ref{secpictures}. In Section~\ref{secdiscrim} we remind some properties of the set $\Delta$ and we explain the notation used on the pictures. 

\section{Properties of the discriminant set \protect\label{secdiscrim}}
%rajouter ref

\noindent The set $\Delta$ partitions $\mathbb{R}^4\setminus \Delta$ into three open domains, in which a polynomial of the family $P$ has $5$, $3$ or $1$ simple real roots and hence $0$, $1$ or $2$ conjugate pairs respectively (for properties of discriminants see~\cite{Arnold}). On the figures these domains are indicated by the letters $h$, $t$ and $s$ respectively. We remind that polynomials of the domain $h$ (i.e. with all roots real) are called {\em hyperbolic}; the set of values of the parameters $(a,b,c,d)$ for which the polynomial $P$ is hyperbolic is called the {\em hyperbolicity domain} of the family (\ref{eqP}). The domain $h$, contrary to the domains $t$ and $s$, is not present on all figures, and when it is present, it is bounded; it is a curvilinear quadrigon or triangle, see part (2) of Remarks~\ref{remsfighyp}.  The set $\Delta$ and the coordinate hyperplanes together partition the set

\begin{equation}\label{eqRDelta}  
\mathbb{R}^4~\setminus ~\{ ~\Delta ~\cup ~\{ ~a=0~\} ~\cup ~\{ ~b=0~\} ~\cup ~\{ ~c=0~\} ~\cup ~\{ ~d=0~\} ~\}
\end{equation}
into open domains in each of which both the number of real roots and the signs of the coefficients of the polynomial $P$ remain the same; in fact, the real roots are distinct and nonzero hence the number of positive and negative roots is the same in each of the domains. The non-realizability of the second of the cases (\ref{eqGrAlFu}) is explained by the absence of the corresponding domain. 

%ajouter ici une référence à la partie où on expliquera l'absence de ce domaine
\begin{rem}
{\rm It would be interesting to (dis)prove that each of the open domains of the set (\ref{eqRDelta}) is contractible
and that to each realizable case (SP, AP) there corresponds exactly one of these domains.
}
\end{rem}

\begin{rem}
{\rm For $d=6$, $7$ and $8$, the following {\em neighbouring property} holds true (the property can be checked directly
using the results of \cite{AlFu}, \cite{FoKoSh} and \cite{Czechoslovak}): For each two non-realizable orbits $C_0$, $C_*$ of the $\mathbb{Z}_2\times \mathbb{Z}_2$-action one can find a finite string of such orbits $C_1$, $C_2$, $\ldots$, $C_s=C_*$ such that for each two of the orbits $C_i$ and $C_{i+1}$ of this string there exist couples (SP, AP) $C_i^0\in C_i$, $C_{i+1}^0\in C_{i+1}$, such that either the SPs of $C_i^0$ and $C_{i+1}^0$ differ only by one sign and their APs are the same, or their SPs and one 
of the components of their APs are the same while the other components of the APs differ by $\pm 2$.

Example: for $d=6$, the non-realizable cases are the ones of the orbits of the following couples
(SP, AP), see~\cite{AlFu}:

$$\begin{array}{lcl}
%(~(+,+,-,+,+)~,~(2,0)~)
(~(+,+,-,+,-,-,+)~,~(4,0)~) &,& (~(+,+,-,+,-,+,+) ~,~(2,0)~)  \\
(~(+,+,-,+,-,+,+)~,~(4,0)~) &{\rm and}& (~(+,+,-,+,+,+,+) ~,~(2,0)~)~. 
\end{array}$$
%$$(+,+,-,+,-,+,+)~(4,0)~{\rm and}~ (+,+,-,+,+,+,+)~(2,0)$$

\noindent It is clear that they are neighbouring.

If the couples $C_i^0$ and $C_{i+1}^0$ were realizable, then they would correspond to two domains of the set (\ref{eqRDelta}) separated by a hypersurface, either by $\Delta$ or by one of the coordinate hyperplanes.

For $d=9$, the neighbouring property does not hold true. Indeed, for $d=9$, there exists a single non-realizable case (modulo the $\mathbb{Z}_2\times \mathbb{Z}_2$-action) with both components of the AP nonzero, this is the couple $C_{\sharp}:=((+,-,-,-,-,+,+,+,+,-),(1,6))$, see~\cite{YVH2s}. There exist non-realizable cases in which one of the components of the AP 
equals 0, see \cite{YVHd9}. However there are no non-realizable couples in which the AP equals $(1,8)$, $(1,4)$, $(3,6)$, $(8,1)$, $(4,1)$ or $(6,3)$. For $d=9$, there exist non-realizable cases in which one of the components of the AP equals 0,
see [5]. Hence if $C_0$ is an orbit of a couple (SP, AP)
with one of the components of the AP equal to $0$ and $C_*$ is the orbit of the couple $C_{\sharp}$, 
then one cannot construct the string of orbits $C_i$.}
\end{rem}
% fin rq
The set $\Delta$ is {\em stratified}. Its strata are defined by the {\em multiplicity vectors} of the real roots of the polynomials of the family~$P$ (in the case of two conjugate pairs, we do not specify whether these pairs are distinct or not). The notation which we use for the strata should be clear from the following example:

\begin{ex}
{\rm There is a single stratum $\mathcal{T}_5$ corresponding to a polynomial with a five-fold real root. This polynomial is}

\begin{equation}\label{T5}
(x+1/5)^5=x^5+x^4+2x^3/5+2x^2/25+x/125+1/3125
\end{equation}
{\rm and the stratum $\mathcal{T}_5$ is of dimension $0$ in $\mathbb{R}^4$. The strata $\mathcal{T}_{4,1}$, $\mathcal{T}_{3,2}$, $\mathcal{T}_{2,3}$ and $\mathcal{T}_{1,4}$ are of dimension 1 in $\mathbb{R}^4$; they correspond to polynomials of the form $(x-x_1)^m(x-x_2)^{5-m}$, where $x_1<x_2$ and $m=4$, $3$, $2$ and $1$ respectively; hence $mx_1+(5-m)x_2=-1$. The stratum $\mathcal{T}_3$ is of dimension $2$ and corresponds to polynomials $(x-x_1)^3(x^2+ux+v)$, where $u^2<4v$.}

\end{ex}

\begin{rems}\label{dimtang} 

{\rm (1) The dimension of a stratum is equal to the number of distinct roots (real or complex) minus 1; we subtract 1, because the sum of all roots equals $(-1)$. Thus $\mathcal{T}_{4,1}$, $\mathcal{T}_{3,2}$, $\mathcal{T}_{2,3}$ and 
$\mathcal{T}_{1,4}$ are the only strata of dimension 1. As $d=5$, i.e. as $d$ is odd, there is always at least one real root, so a stratum corresponding to polynomials having at least one conjugate pair (hence to polynomials having $\geq 3$ 
distinct roots) is of dimension $\geq 2$. This is the case of the stratum~$\mathcal{T}_3$.

(2) The tangent space at any point of any stratum of dimension 1, 2 or 3 is transversal to the space $Obcd$, $Ocd$ or $Od$
respectively. This follows from~\cite[Theorem~2]{VMJ}}.
\end{rems}
%à compléter la fin de remarks 1 

%[ Il faut placer les figures lm2 et lm3 ici. Il faut leur donner des labels lm2 et lm3. ]
\noindent On Fig.~\ref{fig1} and Fig.~\ref{fig2} we show the projections in the $(a,b)$-plane of the strata $\mathcal{T}_5$, $\mathcal{T}_{4,1}$, $\mathcal{T}_{3,2}$, $\mathcal{T}_{2,3}$ and $\mathcal{T}_{1,4}$. The union of the projections of the three strata $\mathcal{T}_{4,1}$, $\mathcal{T}_5$ and $\mathcal{T}_{1,4}$ (resp. $\mathcal{T}_{3,2}$, $\mathcal{T}_5$ and $\mathcal{T}_{2,3}$) is an algebraic curve drawn by a solid (resp. dashed) line and having a cusp at the projection of $\mathcal{T}_5$; the coordinates of the projection of $\mathcal{T}_5$ are (2/5, 2/25), see (\ref{T5}). When following a vertical line (i.e. parallel to the $b$-axis) from below to above, the projections of the strata are intersected in the following order: $\mathcal{T}_{4,1}$, $\mathcal{T}_{3,2}$, $\mathcal{T}_{2,3}$,  $\mathcal{T}_{1,4}$. These projections and the $a$- and $b$-axes define 15 open {\em zones} in  $\mathbb{R}^2$ (the space $Oab$), denoted by $A$, $B$, $\ldots$, $M$, $N$ and $P$.

The SPs which we use begin with $(+,+)$. In the right upper corner of Fig.~\ref{fig1} the notation $\sigma=(+,+,+,+,\tilde{\sigma})$ means that when one chooses the values of the variables $(a,b)$ from the first quadrant, then this defines the SP $\sigma$, in which $\tilde{\sigma}$ stands for the couple of signs of the variables $(c,d)$ (and similarly for the other three corners of Fig.~\ref{fig1}). Recall that in the plane, the four open quadrants correspond to the following couples of signs of the two coordinates: 

$$ {\rm I}:~(+,+),~{\rm II}:~(-,+),~{\rm III}:~(-,-)~{\rm and}~{\rm IV}:~(+,-)~.$$ 

\begin{nota}\label{notasigmaij}
{\rm Further in the text, we use the following notation: $\sigma _{i,j}$ means that the signs of the variables $(a,b)$ correspond to the $i$th and the ones of the variables $(c,d)$ to the $j$th quadrant. Thus the SPs $(+,+,-,+,+,-)$ and $(+,+,+,+,-,-)$ are denoted by $\sigma _{2,4}$ and $\sigma _{1,3}$ respectively.

We explain now the meaning of the pictures. On Fig.~\ref{fig1},~\ref{fig2}, ~\ref{projstrata1} and~\ref{projstrata2} we represent the plane $(a,b)$; the $a$-axis is horizontal and the $b$-axis is vertical. On the rest of the figures we represent the plane $(c,d)$; the $c$-axis is horizontal and the $d$-axis is vertical. We fix a value $(a_0,b_0)$ of the couple $(a,b)$ from one of the domains $A$, $\ldots$, $N$, $P$, and we draw the set $\Delta ^{\sharp}:=\Delta |_{(a,b)=(a_0,b_0)}$. The figures thus obtained resemble the ones given in \cite{Pos} in relationship with the butterfly catastrophe. Indeed, in the definition of the latter one uses a degree 5 monic polynomial family $S$ with vanishing coefficient of $x^4$. The family $P(x,a,b,c,d)$, see (\ref{eqP}), is obtained from $S$ via the shift $x\mapsto x+1/5$ which means making an upper-triangular affine transformation in the space of coefficients. The convexity of the curves shown on the figures results from the following theorem of I. M\'eguerditchian, see~\cite[Proposition~1.3.3]{Me}, which is a generalization of a result of B. Chevallier, see~\cite{Che}.}
\end{nota}

\begin{tm}\label{tmMe}
Locally the discriminant set $\Delta$ at a point, where it is smooth, belongs entirely to one of the two half-spaces defined by its tangent hyperplane, namely, the one, where the polynomial $P$ has two more real roots.
\end{tm}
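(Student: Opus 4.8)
The plan is to reduce the statement to a local normal-form computation at the multiple root and then translate the resulting algebraic inequality into a count of real roots. First I would use the stratification to fix the local picture: at a smooth point $P_0$ of $\Delta$ the multiplicity vector has exactly one real root $x_0$ of multiplicity two and all other roots simple (a second coincidence would drop the dimension below $\dim \Delta = 3$), so $P_0(x_0)=P_0'(x_0)=0$ while $P_0''(x_0)\neq 0$. I would realize $\Delta$ near $P_0$ as the image, under the projection forgetting $\xi$, of the smooth variety $\widetilde{\Delta}=\{(a,b,c,d,\xi):P(\xi)=P'(\xi)=0\}$; because $P_0''(x_0)\neq 0$ this projection is a local diffeomorphism onto $\Delta$. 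Differentiating $P(\xi)=0$ along a curve in $\widetilde{\Delta}$ gives $\dot P(\xi)+P'(\xi)\dot\xi=0$, and $P'(\xi)=0$ forces $\dot P(x_0)=0$. Hence every tangent vector $\dot P=(\dot a,\dot b,\dot c,\dot d)$ to $\Delta$ at $P_0$ satisfies $\dot a x_0^3+\dot b x_0^2+\dot c x_0+\dot d=0$, so the tangent hyperplane is exactly $H:=\{P:P(x_0)=0\}$, with affine defining function $\ell(P)=P(x_0)$.

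Since $\Delta$ is tangent to $H$, the restriction $\ell|_{\Delta}$ vanishes to first order at $P_0$ and I must pass to second order. For any $P\in\Delta$ near $P_0$, with double root $\xi$ close to $x_0$, expanding at $\xi$ and using $P(\xi)=P'(\xi)=0$ yields
\[
\ell(P)=P(x_0)=\tfrac12 P''(\xi)(x_0-\xi)^2+O(|x_0-\xi|^3).
\]
As $P''(\xi)\to P_0''(x_0)\neq 0$, the sign of $\ell(P)$ is constant (equal to that of $P_0''(x_0)$) for $P\in\Delta$ close to $P_0$ with $P\neq P_0$. Therefore $\Delta$ lies locally in one of the two closed half-spaces cut out by $H$, which already establishes the first assertion of the theorem.

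To identify that half-space I would unfold $P_0$ transversally: for $P=P_0+\eta$ with $\eta$ small, set $x=x_0+u$ and keep the leading terms
\[
P(x_0+u)=\tfrac12 P_0''(x_0)u^2+\eta'(x_0)u+\eta(x_0)+O(\cdots),
\]
so near $x_0$ the polynomial has two real roots precisely when the discriminant of this quadratic in $u$ is positive, and these roots coalesce exactly on $\Delta$. Comparing this real-root condition with the value $\ell(P)=P(x_0)=\eta(x_0)$ and using $P_0''(x_0)\neq 0$ identifies the closed half-space containing $\Delta$ as the one into which the double root unfolds, namely the side where $P$ has two more real roots, as claimed. The transversality recorded in Remarks~\ref{dimtang} guarantees the projection $\widetilde{\Delta}\to\Delta$ is the required local diffeomorphism, and since the remaining roots of $P_0$ are simple and bounded away from $x_0$ they produce no competing root collisions, so the local analysis at $x_0$ controls the global side.

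I expect the main obstacle to be the bookkeeping of signs in the last step: the tangency forces a genuinely second-order comparison, so one must track the sign of $P_0''(x_0)$ consistently through both the side computation (where it fixes $\mathrm{sign}\,\ell(P)$ on $\Delta$) and the root count (where it governs the discriminant of the local quadratic), in order to match the algebraic half-space $\{\mathrm{sign}\,P(x_0)=\mathrm{sign}\,P_0''(x_0)\}$ with the geometric statement about gaining two real roots. Once this sign matching is carried out uniformly near $P_0$, the two halves of the statement, convexity and the identification of the half-space, follow together.
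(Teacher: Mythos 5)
The paper never proves this theorem: it is imported verbatim from M\'eguerditchian's thesis (\cite[Proposition~1.3.3]{Me}), so there is no in-paper argument to compare yours against and your proposal must stand on its own. Its first half does stand. Identifying the tangent hyperplane at a smooth point $P_0=(x-x_0)^2Q_0(x)$, $Q_0(x_0)\neq0$, as $H=\{P:P(x_0)=0\}$ is correct, and for a nearby point of $\Delta$, written as $(x-\xi)^2Q(x)$, your second-order expansion is even exact: $\ell(P)=P(x_0)=(x_0-\xi)^2Q(x_0)$ with $Q(x_0)$ close to $Q_0(x_0)=\tfrac12P_0''(x_0)\neq0$. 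Hence $P(x_0)$ has the weak sign of $P_0''(x_0)$ on $\Delta$ near $P_0$, and $\Delta$ lies locally in one closed half-space bounded by $H$. (Two small caveats: equality $\ell(P)=0$ holds on the whole two-parameter tangency locus $\xi=x_0$, not only at $P=P_0$; and your dimension-count justification that smooth points carry exactly one real double root is loose --- non-smoothness at the deeper strata is what Lemma~\ref{lmproduct} actually delivers.)

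The genuine gap is the final identification of the half-space, and it is not mere bookkeeping: the matching you assert is backwards. Complete your own computation: with $P=P_0+\eta$, the local model $\tfrac12P_0''(x_0)u^2+\eta'(x_0)u+\eta(x_0)$ has two real roots iff $\eta'(x_0)^2-2P_0''(x_0)\eta(x_0)>0$. If, say, $P_0''(x_0)>0$, this region contains the entire open half-space $\{P(x_0)<0\}$ --- the side \emph{not} containing $\Delta$ --- while $\Delta\subset\{P(x_0)\ge0\}$ and the polynomials with two \emph{fewer} real roots (the local ``cup'') also lie in $\{P(x_0)>0\}$. One-line check: for $Q_0(x_0)>0$ the polynomial $P_0-t$, $t>0$ small, gains two real roots near $x_0$ and has $\ell(P_0-t)=-t<0$, whereas $P_0+t$ loses them and has $\ell(P_0+t)=t>0$. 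So what your argument actually proves is: $\Delta$ lies in the closed half-space $\{\operatorname{sign}P(x_0)=\operatorname{sign}P_0''(x_0)\}$, and the \emph{complementary} open half-space consists entirely of polynomials with two more real roots; equivalently, $\Delta$ lies on the same side of $H$ as the domain with two fewer real roots, which is thereby locally convex. Your sentence ``identifies the closed half-space containing $\Delta$ as \ldots the side where $P$ has two more real roots'' asserts exactly the opposite pairing, and the phrase ``the side into which the double root unfolds'' hides the discrepancy rather than resolving it. A correct write-up must state the orientation in the invariant form just given --- which is also the only reading under which the theorem's own wording is true, since the half-space that genuinely deserves the label ``where $P$ has two more real roots'' is the one $\Delta$ avoids, not the one it lies in.
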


We use also another result of \cite{Me}: 

\begin{lm}\label{lmproduct}

{\rm [Lemma about the product]} Suppose that $P_1$, $\ldots$, $P_s$ are monic polynomials, where for $i\neq j$, the polynomials $P_i$ and $P_j$ have no root in common. Set $P:=P_1\cdots P_s$. Then there exist open neighbourhoods $U_k$ of $P_k$ and $U$ of $P$ such that for $Q_k\in U_k$, the mapping 
$$U_1\times \cdots \times U_s\rightarrow U~~~\, \, ,~~~\, \, (Q_1,\ldots ,Q_s)\mapsto Q_1\cdots Q_s$$ 
is a diffeomorphism. 
\end{lm}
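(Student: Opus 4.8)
The plan is to recognize the multiplication map as a smooth map between two affine spaces of the same dimension and to prove it is a local diffeomorphism at $(P_1,\ldots ,P_s)$ by means of the inverse function theorem. I identify the space of monic polynomials of degree $d_k:=\deg P_k$ with the affine space $\mathbb{R}^{d_k}$ of their lower-order coefficients; its tangent space at any point is the space $V_k$ of polynomials of degree $\leq d_k-1$. Writing $d:=\sum _k d_k=\deg P$, the multiplication map $\mu\colon (Q_1,\ldots ,Q_s)\mapsto Q_1\cdots Q_s$ sends the product of these affine spaces, of total dimension $d$, into the space of monic polynomials of degree $d$, which is again of dimension $d$. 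Since its components are polynomials in the coefficients, $\mu$ is smooth.

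First I would compute the differential of $\mu$ at $(P_1,\ldots ,P_s)$. Expanding $\prod _k(P_k+R_k)$ with $R_k\in V_k$ and retaining the linear part yields
\[
d\mu _{(P_1,\ldots ,P_s)}(R_1,\ldots ,R_s)=\sum _{k=1}^s R_k\prod _{j\neq k}P_j=\sum _{k=1}^s R_k\,\frac{P}{P_k}~.
\]
This is a linear map between two $d$-dimensional spaces (the target being the polynomials of degree $\leq d-1$), so to prove it is an isomorphism it suffices to prove it is injective.

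The key step, and the only real content, is this injectivity, which rests on the hypothesis that the $P_k$ are pairwise coprime. Suppose $\sum _k R_k(P/P_k)=0$ with $\deg R_k<d_k$. Fix an index $k$ and reduce modulo $P_k$: every term with $j\neq k$ is divisible by $P_k$ and hence vanishes, leaving $R_k(P/P_k)\equiv 0\pmod{P_k}$. Since $P/P_k=\prod _{j\neq k}P_j$ is coprime to $P_k$, it is invertible modulo $P_k$, whence $R_k\equiv 0\pmod{P_k}$; as $\deg R_k<d_k$, this forces $R_k=0$. As $k$ was arbitrary, the differential is injective, hence an isomorphism. Equivalently, this is the existence and uniqueness of the partial-fraction decomposition of $T/P$ for $\deg T<d$ adapted to the coprime factorization $P=\prod _kP_k$, or the Chinese Remainder isomorphism $\mathbb{R}[x]/(P)\cong \prod _k\mathbb{R}[x]/(P_k)$.

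Finally I would invoke the inverse function theorem: since $d\mu$ is invertible at $(P_1,\ldots ,P_s)$, there is an open neighbourhood $W$ of this point on which $\mu$ restricts to a diffeomorphism onto an open image. Choosing open neighbourhoods $U_k$ of $P_k$ with $U_1\times \cdots \times U_s\subseteq W$ and setting $U:=\mu (U_1\times \cdots \times U_s)$, which is open because a local diffeomorphism is an open map, gives the asserted diffeomorphism. I expect no serious obstacle beyond the coprimality argument in the injectivity step; the remaining parts are the routine differential computation and the standard application of the inverse function theorem.
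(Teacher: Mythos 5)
The paper itself does not prove this lemma: it is quoted as a known result from M\'eguerditchian's thesis \cite{Me}, so there is no internal proof to compare yours against. Your argument is correct and supplies the standard self-contained proof: identify the monic polynomials of degree $d_k$ with $\mathbb{R}^{d_k}$, note that the multiplication map is polynomial in the coefficients, compute its differential at $(P_1,\ldots ,P_s)$ as $(R_1,\ldots ,R_s)\mapsto \sum _k R_k\, P/P_k$, and observe that its kernel is trivial precisely because the $P_k$ are pairwise coprime --- your reduction modulo $P_k$ is exactly the Chinese Remainder isomorphism, equivalently the uniqueness of partial fractions --- so the inverse function theorem applies; your final step of shrinking to a product neighbourhood $U_1\times \cdots \times U_s$ and taking $U$ to be its (open) image is also handled correctly. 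Two small points are worth making explicit. First, ``no root in common'' must be read over $\mathbb{C}$: if it meant only real roots, coprimality could fail and the lemma would be false (for $P_1=P_2=x^2+1$, swapping the two factors shows the map is $2$-to-$1$ off the diagonal in every neighbourhood). Second, the dimension count $\deg P=\sum _k\deg P_k$ is what lets injectivity of the differential yield bijectivity; you state this, and it is the reason the argument closes. With these understandings your proof is complete and is surely essentially the argument in \cite{Me}.
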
 

%On all figures the only singularities of the sets $\Delta |_{(a,b)=(a_0,b_0)}$ are cusps (denoted by $\kappa$, $\lambda$ or $\mu$) or transversal self-intersections (denoted by $\phi$, $\psi$ or $\theta$). This is because the values of 
%$(a,b)$ avoid the projections in the plane of the variables $(a,b)$ of strata of dimension 0 or 1. At a cusp point the polynomial $P$ has a triple real root. At a self-intersection it has one simple and two double roots. The letters $\alpha$ and $\omega$ denote the "infinite branches" of the set $\Delta |_{(a,b)=(a_0,b_0)}$. 

\begin{rems}\label{remsfighyp}
{\rm (1) From Lemma~\ref{lmproduct} one can deduce what local singularities of the sets $\Delta ^{\sharp}$ can be encountered. At a point $(a,b,c,d)$ for which the polynomial $\tilde{P}:=P(x,a,b,c,d)$ has one double and one or three simple roots the set $\Delta ^{\sharp}$ is smooth. Indeed, according to Lemma~\ref{lmproduct} in this case the set $\Delta ^{\sharp}$ is locally diffeomorphic to the cartesian product of the discriminant set of the family $x^2+ux+v$, $u$, $v\in \mathbb{R}$ (which is the curve $u^2=4v$), and $\mathbb{R}^2$.

At a point where $\tilde{P}$ has a triple real root and $2$ or $0$ simple real roots, the set $\Delta ^{\sharp}$ is diffeomorphic to the cartesian product of a semi-cubic parabola (i.e. a cusp) and $\mathbb{R}^2$. Indeed, the discriminant set of the family of polynomials $S_1:=x^3+ux+v$, $u$, $v\in \mathbb{R}$, is the curve $27v^2+4u^3=0$. The family $S_2:=x^3+wx^2+u^*x+v^*$ is obtained from $S_1$ via the shift $x\mapsto x+w/3$; here $u^*=u+w^2/3$ and $v^*=v+uw/3+w^3/27$. On the figures cusp points are denoted by $\kappa$, $\lambda$ and $\mu$. 

At a point where $\tilde{P}$ has one simple and two double roots, the set $\Delta ^{\sharp}$ is locally diffeomorphic to the cartesian product of two transversally intersecting smooth curves and $\mathbb{R}^2$, see Lemma~\ref{lmproduct}. On the figures, such points are denoted by $\phi$, $\psi$ or~$\theta$.     

At a point where $\tilde{P}$ has a triple and a double real roots, the set $\Delta ^{\sharp}$ is locally diffeomorphic to a cartesian product of $\mathbb{R}^2$ and the union of a semi-cubic parabola and a smooth arc passing through the cusp point and transversal to the geometric tangent at the cusp point. Such points belong to the 
strata $\mathcal{T}_{2,3}$ and $\mathcal{T}_{3,2}$. We do not show such sets $\Delta ^{\sharp}$ on the pictures.

Finally, if $\tilde{P}$ has a quadruple and a simple real roots (such points belong to the strata $\mathcal{T}_{1,4}$ and $\mathcal{T}_{4,1}$), then locally the set $\Delta ^{\sharp}$ is diffeomorphic to the cartesian product of a swallowtail and $\mathbb{R}$. For a picture of a swallowtail see~\cite{Pos}.

On the figures the letters $\alpha$ and $\omega$ denote the "infinite branches" of the sets $\Delta^{\sharp}$. 

There are no vertical tangent lines at any point of any of the sets $\Delta ^{\sharp}$, see part (2) of Remarks~\ref{dimtang}.

(2) On the figures the hyperbolicity domain (denoted by $h$) is represented by the following curvilinear triangles or quadrigons: 

\begin{tabular}{lllllll}
$\lambda \mu \phi$ &Figures &\ref{fig4} (right),& \ref{fig5} (left), &\ref{fig14} &and \ref{fig15} (right)&; \\
$\lambda \theta \psi \phi$ &Figures &\ref{fig9} (left), &\ref{fig6} (left), &\ref{fig13} &and \ref{fig16} (left) &; \\
$\lambda \theta \kappa$ &Figures &\ref{fig7},& \ref{fig12}, &\ref{fig17} (left)~&.  
\end{tabular}
  }
\end{rems}

How the set $\Delta$ looks like near the origin is justified by the following lemma:

\begin{lm}
In the space of the parameters $(a,b,c,d)$, a point with $c=d=0$ belongs to the discriminant set $\Delta$. If $b\neq 0=c=d$, then the set $\Delta$ is tangent to the hyperplane $d=0$. For $b>0$ and $b<0$ the set belongs locally to the half-plane $d\geq 0$ and $d\leq 0$ respectively.
\end{lm}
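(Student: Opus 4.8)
The first assertion I would settle immediately by factorization: when $c=d=0$ one has $P(x,a,b,0,0)=x^2(x^3+x^2+ax+b)$, so $x=0$ is a root of multiplicity at least $2$, and hence $(a,b,0,0)\in\Delta$ for every $(a,b)$ (no genericity needed). To treat the tangency and the one-sidedness, the plan is to fix a base point $(a_0,b_0,0,0)$ with $b_0\neq0$ and to describe $\Delta$ near it by parametrizing the branch along which $P$ carries a double root close to $x=0$. Writing the double root as $-t$, a monic member of the family with such a root has the form $P=(x+t)^2(x^3+\beta_2x^2+\beta_1x+\beta_0)$; imposing that the coefficient of $x^4$ equal $1$ forces $\beta_2=1-2t$, so the branch is the image of $\Phi:(t,\beta_1,\beta_0)\mapsto(a,b,c,d)$, with the base point corresponding to $(t,\beta_1,\beta_0)=(0,a_0,b_0)$. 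Since $Q:=x^3+x^2+ax+b$ satisfies $Q(0)=b_0\neq0$, the factors $x^2$ and $Q$ share no root, so for $(a_0,b_0)$ generic (i.e.\ $Q$ with simple roots) Lemma~\ref{lmproduct} guarantees that $\Phi$ parametrizes all of $\Delta$ near the base point and that $\Delta$ is smooth there.

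Expanding the product, I would record $a=\beta_1+2t-3t^2$, $b=\beta_0+2t\beta_1+t^2-2t^3$, $c=2t\beta_0+t^2\beta_1$ and, crucially, $d=t^2\beta_0$. This last identity already carries the whole statement. Computing the differentials of $\Phi$ at $(0,a_0,b_0)$ gives $da=d\beta_1+2\,dt$, $db=d\beta_0+2a_0\,dt$, $dc=2b_0\,dt$ and $dd=0$. Because $b_0\neq0$, the three forms $da,db,dc$ are linearly independent (from $dc$ one recovers $dt$, then $d\beta_1$ from $da$ and $d\beta_0$ from $db$), so $(a,b,c)$ restrict to local coordinates on the branch, which is therefore a graph $d=d(a,b,c)$ with $dd=0$ at the base point. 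Hence the tangent hyperplane to $\Delta$ there is exactly $\{d=0\}$, which is the asserted tangency.

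The side is then read off directly from $d=t^2\beta_0$: near the base point $\beta_0$ keeps the sign of $b_0$ while $t^2\ge0$, so $d\ge0$ when $b_0>0$ and $d\le0$ when $b_0<0$, i.e.\ $\Delta$ lies locally in the half-space $\{d\ge0\}$ or $\{d\le0\}$ respectively. In the planar slices of the figures the same computation specializes, after eliminating $t$, to $c\approx2b_0t$ and $d\approx b_0t^2$, so $\Delta^{\sharp}$ is to leading order the parabola $d=c^2/(4b_0)$, tangent to the $c$-axis at the origin and opening according to the sign of $b_0$. The one place I expect genuine care, rather than routine computation, is the completeness of the local picture: one must ensure that no other component of $\Delta$ passes through the base point. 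This is exactly what Lemma~\ref{lmproduct} supplies once $b_0\neq0$ forces $Q(0)\neq0$; the residual degenerate situation in which $Q$ itself has a multiple real root cuts out only a lower-dimensional subset of the plane $\{c=d=0\}$ and is ruled out by choosing $(a_0,b_0)$ generic, as it is for the values used in the figures.
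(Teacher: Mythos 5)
Your proof is correct and takes essentially the same route as the paper's: the paper likewise writes the nearby points of $\Delta$ as $(x+\varepsilon )^2(g+hx+ux^2+x^3)$ with $g\neq 0$, reads off $c=2\varepsilon g+\varepsilon ^2h$ and $d=g\varepsilon ^2$ (your $d=t^2\beta _0$), and concludes that locally $d=c^2/4g+o(c^2)$, giving both the tangency to $\{d=0\}$ and the sign of $d$ from the sign of $g\approx b$. Your extra steps---the full differential computation in all four coordinates and the explicit exclusion of the degenerate case where $x^3+x^2+ax+b$ has a multiple root (the paper's set $\mathcal{M}$, where a second, non-tangent branch of $\Delta$ passes through the point)---only make explicit what the paper's shorter argument leaves implicit.
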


\begin{proof}
For $c=d=0$, the number $0$ is a double root of $P$ which proves the first claim of the lemma. For $b\neq 0=c=d$, the polynomial $P$ is locally representable in the form $(x+\varepsilon )^2(g+hx+ux^2+x^3)$,where $g\neq 0$. Hence $c=2\varepsilon g+\varepsilon ^2h$ and $d=g\varepsilon ^2$, i.e. the set $\Delta$ is locally defined by an equation of the form $d=c^2/4g+o(c^2)$ which proves the second claim. For $b>0$ and $b<0$ one has $g>0$ and $g<0$ respectively which proves the last statement of the lemma.
\end{proof}

\section{Pictures representing the discriminant set \protect\label{secpictures}}

On Figures \ref{fig3}-\ref{fig10} we show the set $\Delta ^{\sharp}$ for values of $(a,b)$ from the different zones shown on Figures 1 and 2. After each figure we indicate which cases are realizable in the domains $h$, $t$ and $s$, see Notation~\ref{notasigmaij}. Whenever a figure consists of two pictures, the one on the right is a detailed picture close to the origin. Under each of Fig.~\ref{fig3}-\ref{fig10} we indicate the cases which are realizable in the different parts of the plane $(c,d)$ delimited by the coordinate axes and the corresponding set $\Delta ^{\sharp}$. E.g. when after Fig. 4 under "domain t" we write "5 , 9 $\sigma _{2,1}$ (2,1) , (0,3)", this means that in the two parts of the domain $t$ in the first quadrant the cases~($\sigma _{2,1}$,(2,1)) and ($\sigma _{2,1}$, (0,3)) are realizable. The numbers 5 and 9 are numbers of different cases. These numbers are attributed in the order of appearance of the cases. When one and the same case appears in different zones, then it bears the same number. There are two figures corresponding to zone E, see the lines following 
Fig.~\ref{fig6}.

\begin{rems}\label{remscase}
{\rm (1) The following four rules hold true. They allow to define by 
continuity
the case (SP, AP) which is realizable in any domain of the $(c,d)$-plane 
for
$(a,b)$ fixed.

{\em i)} When the $c$-axis is crossed at a point not belonging to the set $\Delta ^{\sharp}$ and different from the origin, then exactly one real root changes sign. When a hyperplane $a=0$, $b=0$ or $c=0$ is crossed, then only the corresponding sign in the SP changes.

{\em ii)} On all pictures, in the part of the domain $s$ which is above the $c$-axis, the only real root of the polynomial $P$ is 
negative. Indeed, for $a$, $b$ and $c$ fixed and $d>0$ large enough, the polynomial $P$ has only one real root which is simple and negative.

{\em iii)} At a cusp point belonging to the closure of the domain $t$ (but not $h$) the triple root of $P$ has the same sign as the single real root in the adjacent $s$-domain.

{\em iv)} Suppose that a point follows the arc of the set $\Delta ^{\sharp}$ which passes through the point $(0,0)$ in the plane $(c,d)$. Then when the point passes from the first into the second quadrant or vice versa, a double real root of $P$ changes sign.

{\em v)} The AP corresponding to a point of the set~(\ref{eqRDelta}) and belonging to the domain $h$ is the Descartes' pair which is defined by the SP. This allows, for each of the Figures  \ref{fig3}-\ref{fig10} containing the domain $h$, to find the cases realizable in each of the parts of the domain $h$. 

{\em vi)} At a self-intersection point $\phi$ or $\psi$ one of the open sectors defined by the two intersecting arcs of $\Delta ^{\sharp}$ belongs to the domain $s$ and its opposite sector belongs to the domain $h$. The other two sectors (denoted here by $S_1$ and $S_2$) belong to the domain $t$. When a point moves from $\phi$ or $\psi$ into $S_1$, then
one of the two double roots of $P$ gives birth to a complex conjugate pair of roots. When a point moves from
$\phi$ or $\psi$ into $S_2$, then the other one of the two double roots of $P$ gives birth to such a pair. This 
follows from Lemma~\ref{lmproduct}.

(2) 
%Each of the open domains of the set (\ref{eqRDelta}) is contractible. To each realizable case
%(SP, AP) there corresponds exactly one of these domains. This can be checked case by case. 
To the possible Descartes' pairs of the SPs beginning with $(+,+)$, i.e. to $(0,5)$, $(1,4)$, $(2,3)$, $(3,2)$ and $(4,1)$, there correspond 3, 3, 4, 4 and 3 possible APs respectively. E. g. to the Descartes' pair $(2,3)$ there correspond the possible APs $(2,3)$, $(0,3)$, $(2,1)$ and 
$(0,1)$. This means that for the four quadrants in the plane $(a,b)$ one obtains the following numbers of a priori possible couples (SP, AP) (we list also the zones of each quadrant on Fig.~\ref{fig1}-\ref{fig2} and the possible couples (SP, Descartes' pair) for the given quadrant):
%$$\begin{array}{lllllll}{\rm I:}&L,~M,~N,~P&13~,&&{\rm II:}&A,~B,~C&15~,\\ \\
%{\rm III:}&D,~E,~F,~G&15~,&&{\rm IV:}&H,~I,~J,~K&15~.\end{array}$$

  $$\begin{array}{llll}
{\rm I}:&13&L,M,N,P&(\sigma _{1,1},(0,5)), (\sigma _{1,2},(2,3)), (\sigma _{1,3},(1,4)), (\sigma _{1,4},(1,4))~,\\ \\
{\rm II}:&15&A,B,C&(\sigma _{2,1},(2,3)), (\sigma _{2,2},(4,1)), (\sigma _{2,3},(3,2)), (\sigma _{2,4},(3,2))~,\\ \\
{\rm III}:&15&D,E,F,G&(\sigma _{3,1},(2,3)), (\sigma _{3,2},(2,3)), (\sigma _{3,3},(1,4)), (\sigma _{3,4},(3,2))~,\\ \\
{\rm IV}:&15&H,I,J,K&(\sigma _{4,1},(2,3)), (\sigma _{4,2},(2,3)), (\sigma _{4,3},(1,4)), (\sigma _{4,4},(3,2))~.\end{array}$$

The non-realizable couple (the second of couples (\ref{eqGrAlFu})) 
corresponds to quadrant~II.

(3) On Fig. \ref{projstrata1}-\ref{projstrata2} we show the projections in the plane $(a,b)$ of the 
strata $\mathcal{T}_5$ and $\mathcal{T}_{i,j}$, $1\leq i,j\leq 4$, 
$i+j=5$, (see Fig.~\ref{fig1}-\ref{fig2}), and
of the set $\mathcal{M}$ of values of $(a,b)$ for which the set $\Delta 
^{\sharp}$ has a self-intersection at $c=d=0$. This is the set for which 
the polynomial $x^3+x^2+ax+b$ has a multiple root. It is drawn by a 
dotted line. It

(a) has a cusp point $(1/3,1/27)$ situated on the projection of the stratum $\mathcal{T}_{3,2}$;

(b) is tangent to the projection of $\mathcal{T}_{2,3}$ at $(a,b)=(1/4,0)$;

(c) is tangent to the projection of $\mathcal{T}_{1,4}$ at $(a,b)=(0,0)$;

(d) intersects the projection of $\mathcal{T}_{3,2}$ and $\mathcal{T}_{2,3}$ at $(a,b)=$
$$\begin{array}{ccc}(-8-4\sqrt{10})/15, (-152(2+\sqrt{10})+52)/675)&=&(-1.37\ldots , -1.39\ldots )~~~{\rm and}\\ 
\\
(-8+4\sqrt{10})/15, (-152(2-\sqrt{10})+52)/675)&=&(0.30\ldots , 
0.03\ldots )\end{array}$$
respectively;

(e) has no point in common with the projection of the stratum 
$\mathcal{T}_{4,1}$.

\noindent At these points the polynomial $P$ and the set $\Delta ^{\sharp}$ have respectively

(a) $P$: a triple negative root and a double root at $0$, $\Delta^{\sharp}$: a cusp point at $c=d=0$ with a non-horizontal tangent line at it and a smooth arc with a horizontal tangent line at $c=d=0$;

(b) $P$: a double negative root and a triple root at $0$, $\Delta ^{\sharp}$: a cusp at $c=d=0$ with a horizontal tangent line at it and a smooth arc with a non-horizontal tangent line at $c=d=0$;

(c) $P$: a negative a simple root and a quadruple root at $0$, $\Delta ^{\sharp}$: a $4/3$-singularity with a horizontal tangent line at $c=d=0$;

(d) at $\mathcal{T}_{3,2}$: $P$: a simple positive, a double negative and a double $0$ root, $\Delta ^{\sharp}$: two transversally intersecting arcs at $(0; 0)$ one of which with a horizontal tangent line; at 
$\mathcal{T}_{2,3}$: $P$: a simple and a double negative and a double $0$ root, $\Delta ^{\sharp}$: two transversally intersecting arcs at $(0; 0)$ one of which with a horizontal tangent line.

We do not include the set $\mathcal{M}$ in the partition of the plane $(a,b)$ into zones in order to keep the number of figures to be drawn reasonably low. Some changes of the relative position of the cusps of the set $\Delta ^{\sharp}$ and the coordinate axes $c$ and $d$ as the values of $a$ and $b$ change are commented between the figures.

(4) Two SPs, one corresponding to the third and one to the fourth quadrant in the $(a,b)$-plane, begin by $(+,+,-,-)$ and $(+,+,+,-)$ respectively. Hence if their last two signs are the same, then they contain one and the same number of sign changes and sign preservations. This means that one and the same APs correspond to them. Therefore the two couples (SP, AP), $(\sigma _{3,j},(k_1,k_2))$ and $(\sigma _{4,j},(k_1,k_2))$, are simultaneously realizable.}
\end{rems}

\begin{proof}[Proof of Proposition~\ref{prophowmany}]

We use Notation~\ref{notasigmaij} and the definition of the generators $g_1$ and $g_2$ of the $\mathbb{Z}_2\times \mathbb{Z}_2$-action, see Remark~\ref{remgen}. We consider only SPs beginning with $(+,+)$ which means that we deal with halves of orbits (the other halves are with SPs beginning with $(+,-)$, see Remark~\ref{remgen}). We consider the action of $g_1$ and $g_2$ not only on couples (SP, AP), but also just on SPs. Thus

\begin{equation}\label{eqone}
g_2(\sigma _{2,1})=\sigma _{4,1}~~~~{\rm and}~~~~g_2(\sigma _{1,3})=\sigma _{3,3}~.
\end{equation}

The Descartes' pair corresponding to $\sigma _{2,1}$ (resp. $\sigma_{1,3}$) equals $(2,3)$ (resp. $(1,4)$), see part (2) of Remarks~\ref{remscase}. We denote by $\rho$ any of the APs, so $\rho =(2,3)$, $(0,3)$, $(2,1)$ or $(0,1)$ (resp. $\rho =(1,4)$, $(1,2)$ or $(1,0)$). Thus ($(\sigma _{2,1},\rho )$, $(\sigma _{4,1},\rho )$) (resp. ($(\sigma _{1,3},\rho )$, $(\sigma _{3,3},\rho )$)) are half-orbits; the other halves are of the form ($g_1((\sigma _{2,1},\rho ))$, $g_1((\sigma _{4,1},\rho ))$) (resp. ($g_1((\sigma _{1,3},\rho ))$, $g_1((\sigma _{3,3},\rho ))$)), see Remark~\ref{remgen}. Thus we have described $4+3=7$ orbits of length~$4$.
Next, one has

\begin{equation}\label{eqtwo}
g_1g_2(\sigma _{1,2})=\sigma _{4,4}~,~g_1g_2(\sigma _{2,2})=\sigma _{1,4}~,~g_1g_2(\sigma _{3,2})=\sigma _{2,4}~~{\rm and}~~g_1g_2(\sigma _{4,2})=\sigma _{3,4}~.
\end{equation}

The half-orbits in the case of $\sigma _{1,2}$ are of the form ($(\sigma _{1,2},\rho )$, $(\sigma _{4,4},\rho ^R)$),
where $\rho ^R$ is obtained from $\rho$ by exchanging the two components, similarly for the other cases in (\ref{eqtwo}). Thus one obtains $4+3+4+4=15$ more orbits of length $4$, see part (2) of Remarks~\ref{remscase}.

Finally, one obtains
\begin{equation}\label{eqthree}
g_2(\sigma _{1,1})=\sigma _{1,1}~,~g_2(\sigma _{2,3})=\sigma _{2,3}~,~g_2(\sigma _{3,1})=\sigma _{3,1}~~{\rm and}~~g_2(\sigma _{4,3})=\sigma _{4,3}~,
\end{equation}
so by analogy with the SPs involved in (\ref{eqone}) one obtains half-orbits of length~$1$ hence orbits of
length~$2$. Their quantity is $3+4+4+3=14$. One of them is the only non-realizable orbit, see the second couple in~(\ref{eqGrAlFu}). There remains to notice that each possible SP $\sigma _{i,j}$ participates in exactly one of the equalities (\ref{eqone}), (\ref{eqtwo}) and (\ref{eqthree}), and to remind that the generators $g_1$ and $g_2$ are commuting involutions. Hence we have described all orbits of the $\mathbb{Z}_2\times \mathbb{Z}_2$-action.
\end{proof}

\begin{figure}[H]
%\begin{center}
\vskip0.5cm
\centerline{\hbox{\includegraphics[scale=0.27]{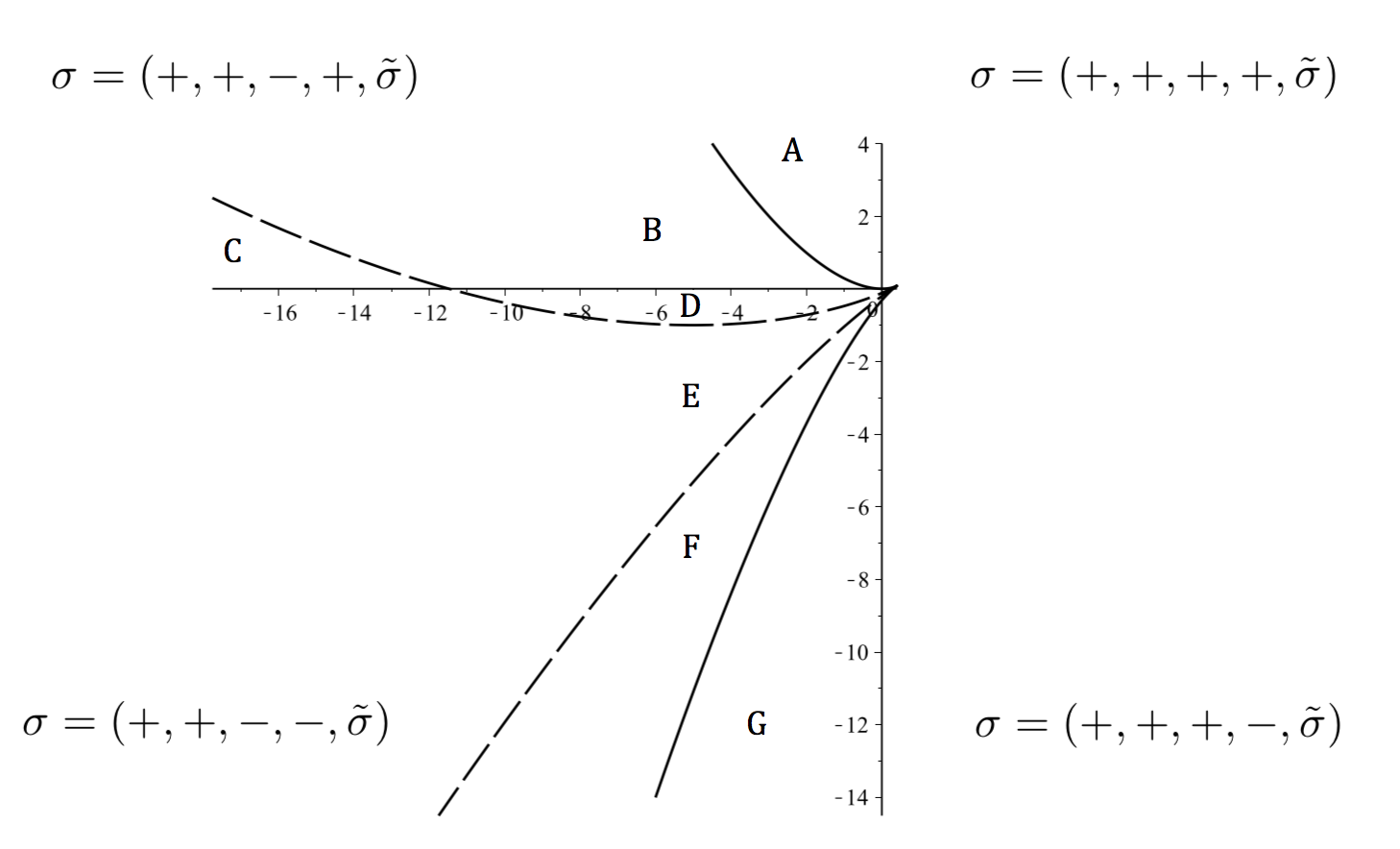}}}
    \caption{The projection of the discriminant locus of $P$ to the plane of the parameters $(a, b)$.}
   % \end{center}
\label{fig1}
\end{figure}

\begin{figure}[H]
%\begin{center}
\vskip0.5cm
\centerline{\hbox{\includegraphics[scale=0.25]{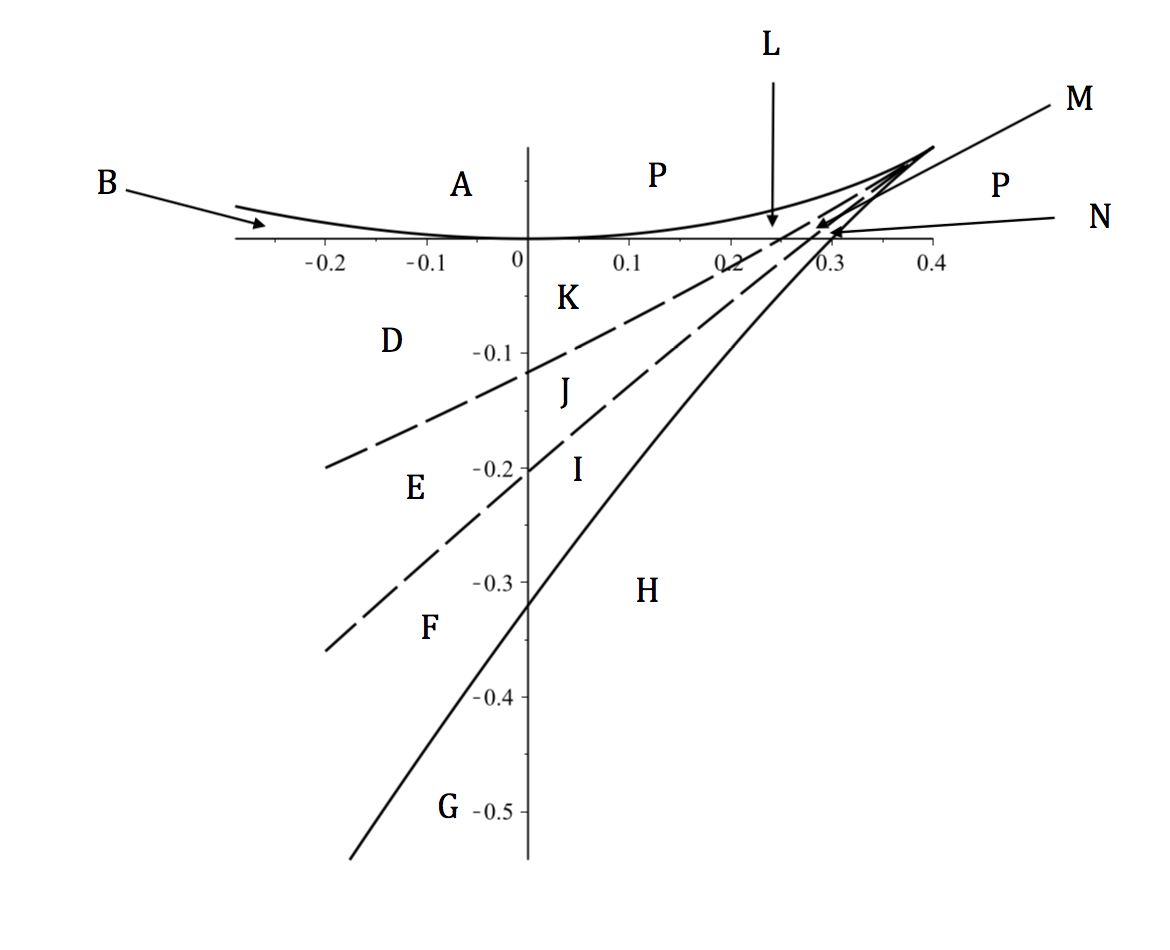}}}

    \caption{ Picture of the projection in the plane $(a,b)$ of the discriminant locus 
of $P$ with an enlarged
portion near the cusp point.}
   %    \end{center}
\label{fig2}
\end{figure}

\begin{figure}[H]
%\begin{center}
\vskip0.5cm
\centerline{\hbox{\includegraphics[scale=0.32]{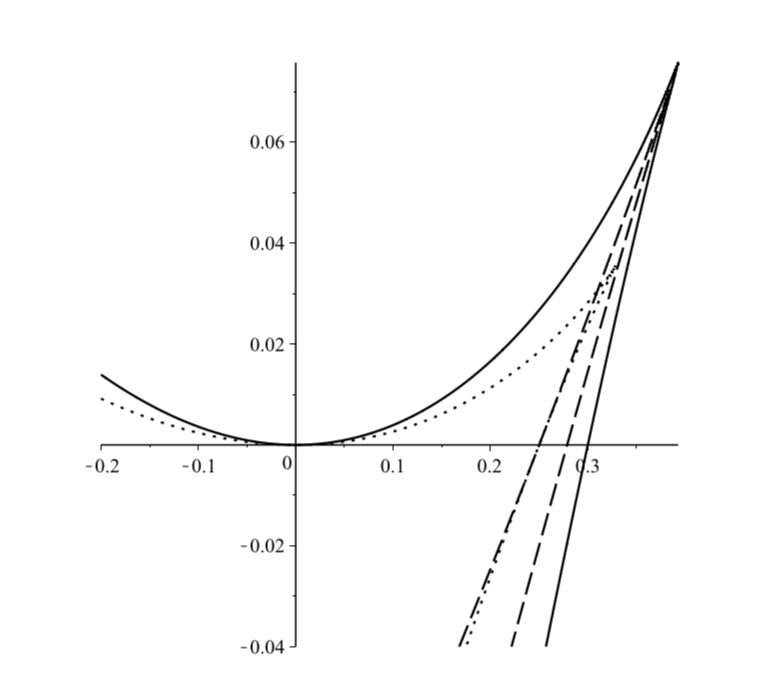}}}

    \caption{ The projections in the plane $(a,b)$ of the strata $\mathcal{T}_5$ and $\mathcal{T}_{i,j}$, $1\leq i,j\leq 4$, $i+j=5$, and of the set $\mathcal{M}$.}
   %    \end{center}
\label{projstrata1}
\end{figure}

\begin{figure}[H]
%\begin{center}
\vskip0.5cm
\centerline{\hbox{\includegraphics[scale=0.33]{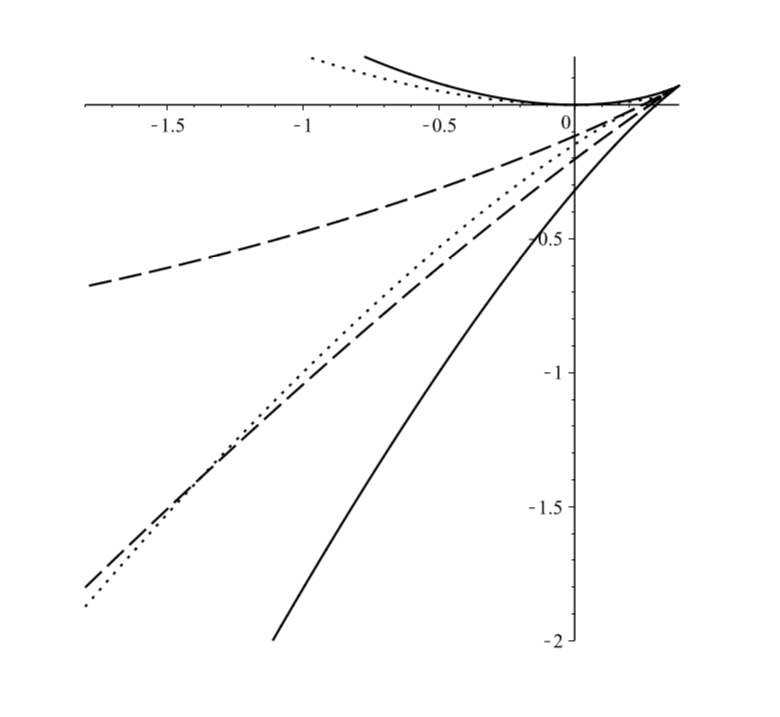}}}

    \caption{ The projections in the plane $(a,b)$ of the strata $\mathcal{T}_5$ and $\mathcal{T}_{i,j}$, $1\leq i,j\leq 4$, $i+j=5$, and of the set $\mathcal{M}$ (with enlarged portion near the cusp points).}
   %    \end{center}
\label{projstrata2}
\end{figure}

%---------------------------------------------------------------------------
  
\begin{figure}[H]
%\begin{center}
\vskip0.5cm
\centerline{\hbox{\includegraphics[scale=0.26]{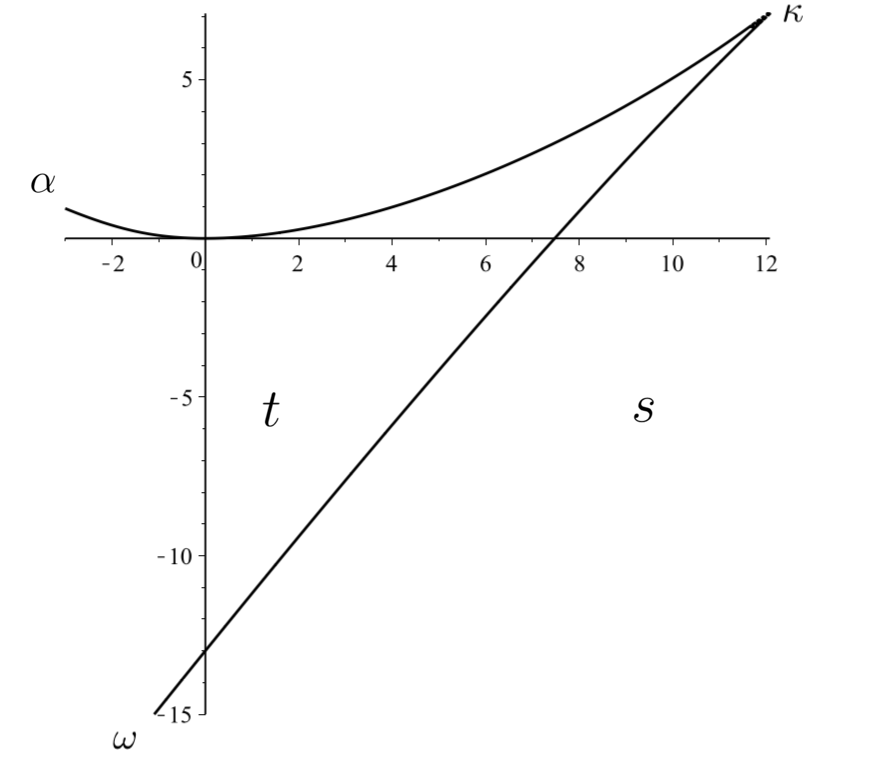}}}

    \caption{Zone A: The set $\Delta^{\sharp}$ for $a=-2$ and $b=3$.}
   %    \end{center}
\label{fig3}

\end{figure}

\begin{tabular}{lclllc}
&domain $s$&&&domain $t$ \\ \\
1&$\sigma_{2,1}$ \, (0,1) &\qquad \qquad \qquad &5&$ \sigma_{2,1}$ \, (0,3)  \\
2&$\sigma_{2,2}$ \, (0,1)&\qquad \qquad \qquad &6&$\sigma_{2,2}$ \, (2,1)  \\
3&$\sigma_{2,3}$ \, (1,0)&\qquad \qquad \qquad &7&$\sigma_{2,3}$ \, (1,2) \\
4&$\sigma_{2,4}$ \, (1,0)  &\qquad \qquad \qquad &8&$\sigma_{2,4}$ \, (1,2)  \\

\end{tabular} \medskip \\

Cases 2 and 6 are realizable in the open second quadrant, above and below the set $\Delta ^{\sharp}$ respectively;
cases 3 and 7 in the open third quadrant, below and above $\Delta ^{\sharp}$ respectively; cases 4 and 8 in the open
fourth quadrant, below and above $\Delta ^{\sharp}$ respectively. Cases 1 and 5 occupy the open first quadrant minus
the set $\Delta ^{\sharp}$, case 5 the curvilinear triangle and case 1 the rest of the quadrant.

%---------------------------------------------------------------------------

\begin{figure}[H]
%\begin{center}
\vskip0.5cm
\centerline{\hbox{\includegraphics[scale=0.3]{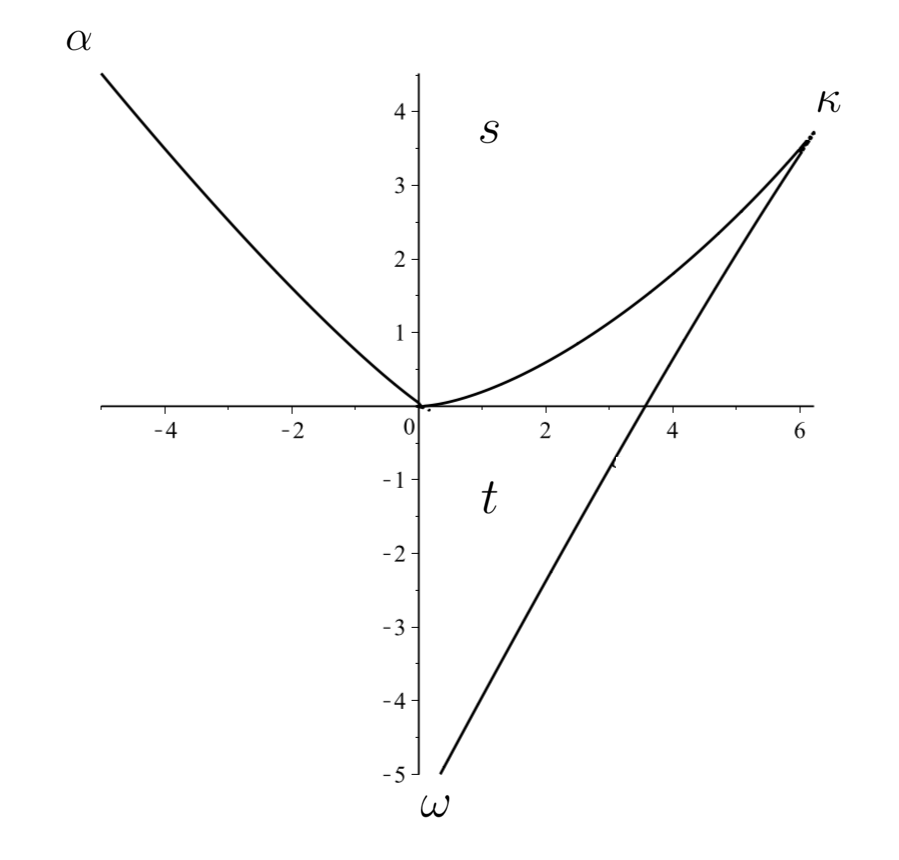}}\hskip1cm \hbox{\includegraphics[scale=0.3]{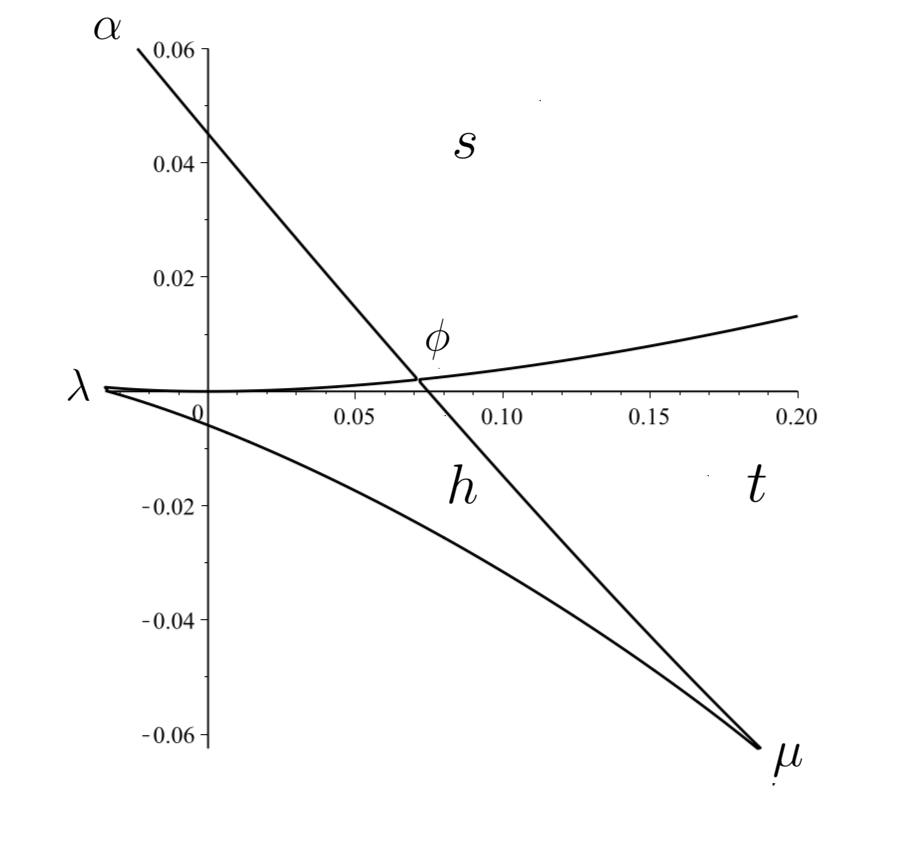}}}
    \caption{Zone B: The set $\Delta^{\sharp}$ for $a=-2$ and $b=0.5$.} 
    %Picture on the right shows the enlarged portion near the origin.
    
   %        \end{center}
\label{fig4}
\end{figure}

\begin{tabular}{lclllcllc}
&domain $s$&&&domain $t$&&&domain $h$ \\ \\

1&$\sigma_{2,1}$ \, (0,1)&&5 , 9&$ \sigma_{2,1}$ \, (0,3) , (2,1)  &\qquad \qquad \qquad&10&$\sigma_{2,1}$ \, (2,3)  \\
2& $\sigma_{2,2}$ \, (0,1)&\qquad \qquad \qquad&6&$\sigma_{2,2}$ \, (2,1)&&11&$\sigma_{2,2}$ \, (4,1)  \\
3&$\sigma_{2,3}$ \, (1,0) &\qquad \qquad \qquad&7&$\sigma_{2,3}$ \, (1,2) &&12&$\sigma_{2,3}$ \, (3,2)  \\
4&$\sigma_{2,4}$ \, (1,0) &\qquad \qquad \qquad&8&$\sigma_{2,4}$ \, (1,2)  &&13&$\sigma_{2,4}$ \, (3,2) \\ \\

\end{tabular} \medskip \\ 
%\bigskip

The infinite branch $\omega$ intersects the negative $d$-half-axis.

%---------------------------------------------------------------------------

\begin{figure}[H]
%\begin{center}
\vskip0.5cm
\centerline{\hbox{\includegraphics[scale=0.3]{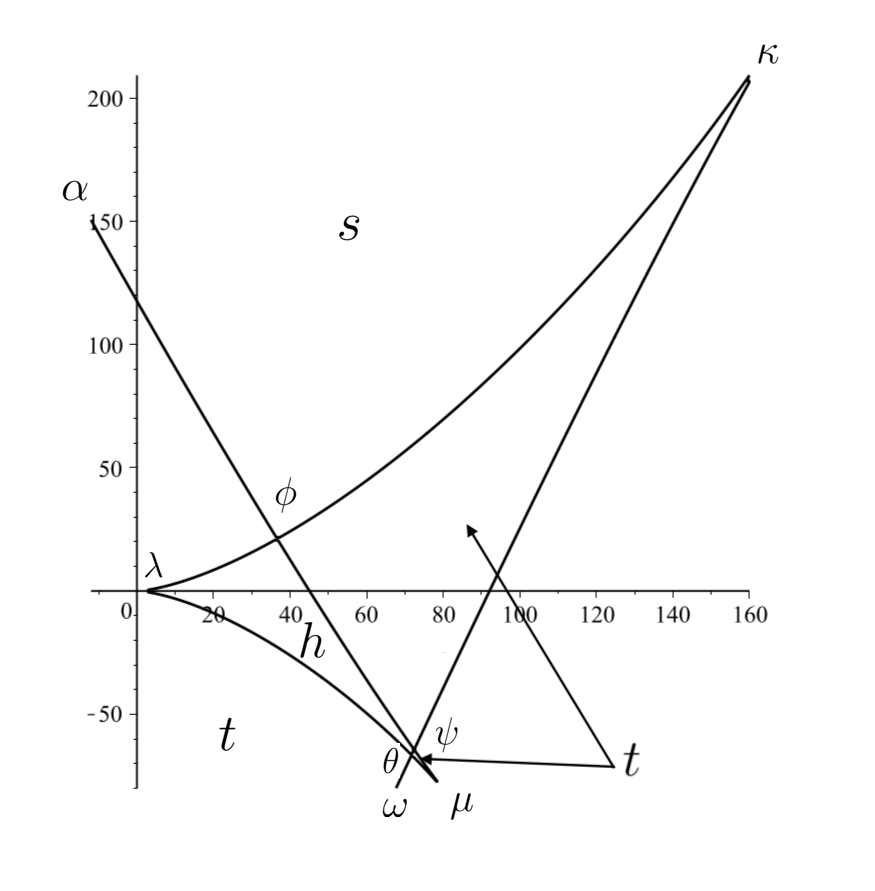}}\hskip1cm \hbox{\includegraphics[scale=0.42]{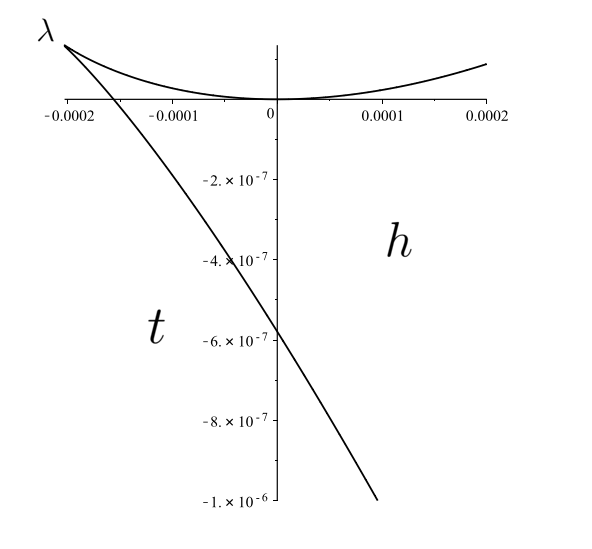}}}
    \caption{Zone C: The set $\Delta^{\sharp}$ for $a=-16$ and $b=0.1$.} 
    
    %Picture on the right shows the enlarged portion near the origin. 
    
   %        \end{center}
\label{fig9}

\end{figure}

\begin{tabular}{lclllcllc}
&domain $s$&&&domain $t$&&&domain $h$ \\ \\

1&$\sigma_{2,1}$ \, (0,1) &&5 , 9&$ \sigma_{2,1}$ \,  (0,3) , (2,1) &\qquad \qquad \qquad&10&$\sigma_{2,1}$ \, (2,3)   \\
2&$\sigma_{2,2}$ \, (0,1)&\qquad \qquad \qquad& 6&$\sigma_{2,2}$ \, (2,1)&&11&$\sigma_{2,2}$ \, (4,1)  \\
3&$\sigma_{2,3}$ \, (1,0) &\qquad \qquad \qquad& 7&$\sigma_{2,3}$ \, (1,2)&&12&$\sigma_{2,3}$ \, (3,2)  \\
4&$\sigma_{2,4}$ \, (1,0)  &\qquad \qquad \qquad&8 , 14&$\sigma_{2,4}$ \, (1,2) , (3,0)&&13&$\sigma_{2,4}$ \, (3,2)   \\ \\

\end{tabular} \medskip

On Fig.~\ref{fig9}, the infinite branch $\omega$ intersects the negative $d$-half-axis. The intersection of the domain $t$ with the fourth quadrant consists of three curvilinear triangles. In the one which borders the third quadrant the AP is $(1,2)$ (as in the intersection of the domain $t$ with the third quadrant), in the one which belongs entirely to the interior of the fourth quadrant it is $(3,0)$ and in the one which borders the first quadrant it is again $(1,2)$.

%---------------------------------------------------------------------------

\begin{figure}[H]
%\begin{center}
\vskip0.5cm
\centerline{\hbox{\includegraphics[scale=0.3]{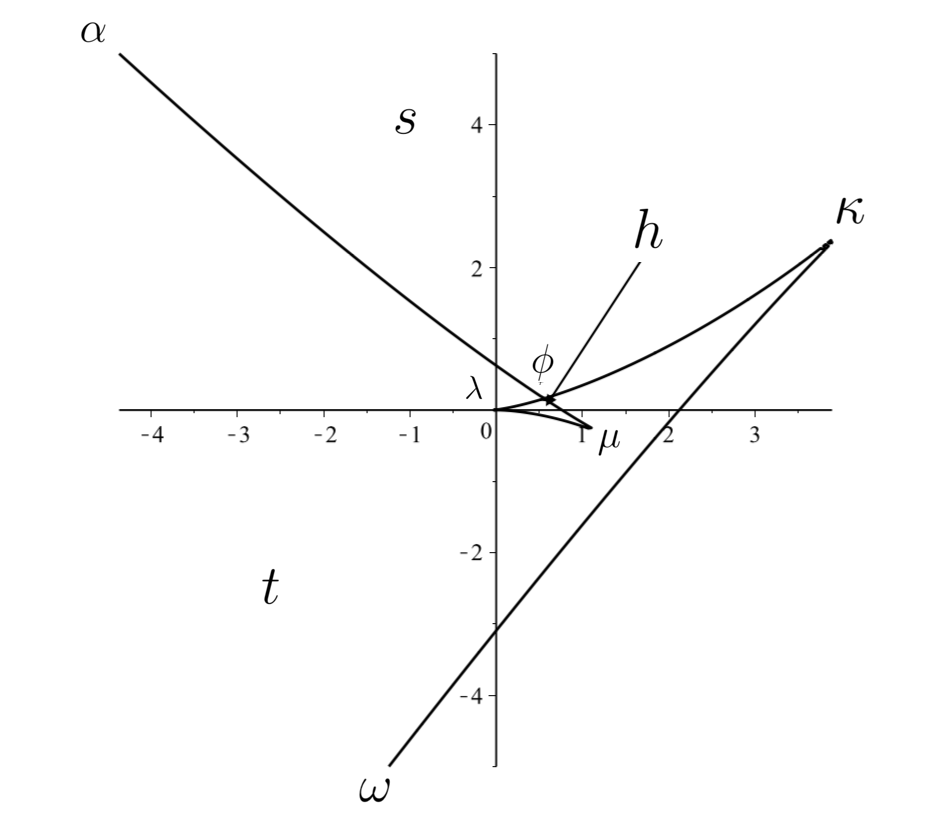}}\hskip1cm \hbox{\includegraphics[scale=0.3]{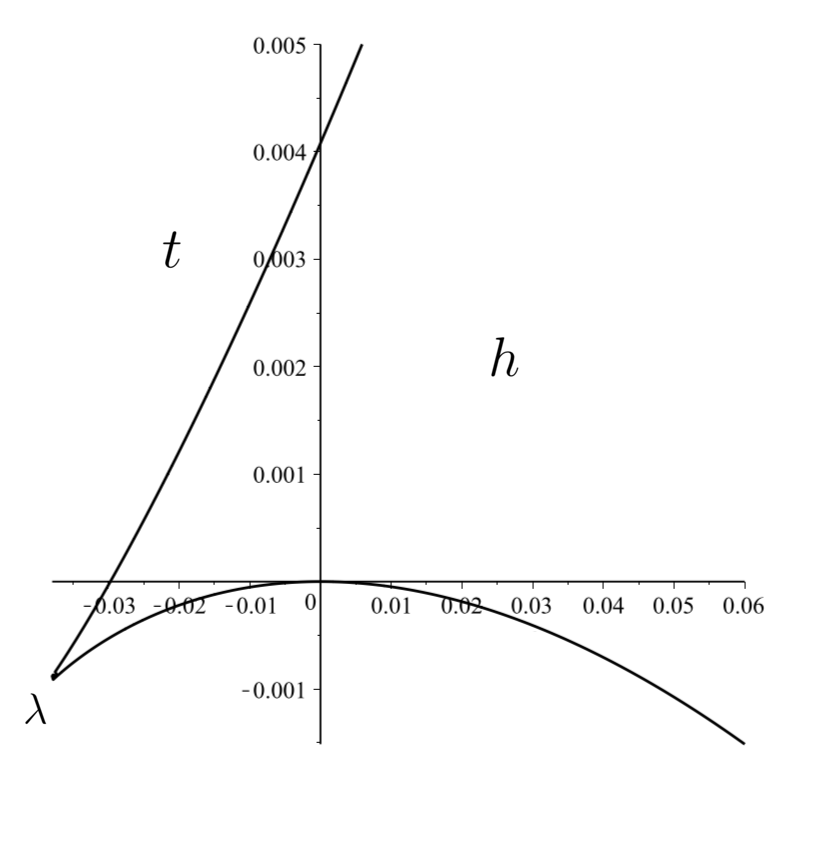}}}
    \caption{Zone D: The set $\Delta^{\sharp}$ for $a=-2$ and $b=-0.5$.}
    %Picture on the right shows the enlarged portion near the origin. 
    
   %        \end{center}
\label{fig5}

\end{figure}

\begin{tabular}{lclllcllc}

&domain $s$&&&domain $t$&&&domain $h$ \\ \\

15&$\sigma_{3,1}$ \, (0,1)&&19 , 20&$ \sigma_{3,1}$ \, (0,3) , (2,1)&\qquad \qquad \qquad&24&$\sigma_{3,1}$ \, (2,3) \\
16&$\sigma_{3,2}$ \, (0,1) &\qquad \qquad \qquad&21&$\sigma_{3,2}$ \, (2,1) &&25&$\sigma_{3,2}$ \, (2,3)  \\
17&$\sigma_{3,3}$ \, (1,0) &\qquad \qquad \qquad&22&$\sigma_{3,3}$ \, (1,2) &&26&$\sigma_{3,3}$ \, (1,4)   \\
18&$\sigma_{3,4}$ \, (1,0) &\qquad \qquad \qquad&23& $\sigma_{3,4}$ \, (1,2)  &&27&$\sigma_{3,4}$ \, (3,2)  \\ \\

\end{tabular} \medskip

%---------------------------------------------------------------------------

\begin{figure}[H]
%\begin{center}
\vskip0.5cm
\centerline{\hbox{\includegraphics[scale=0.3]{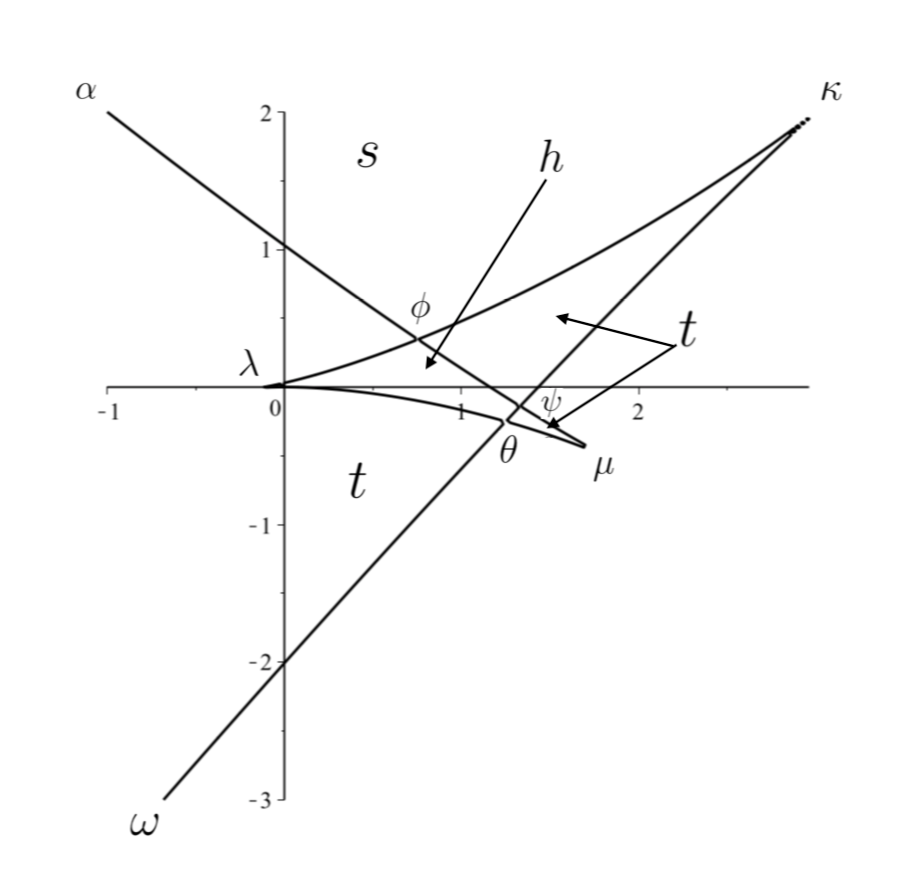}}\hskip1cm \hbox{\includegraphics[scale=0.44]{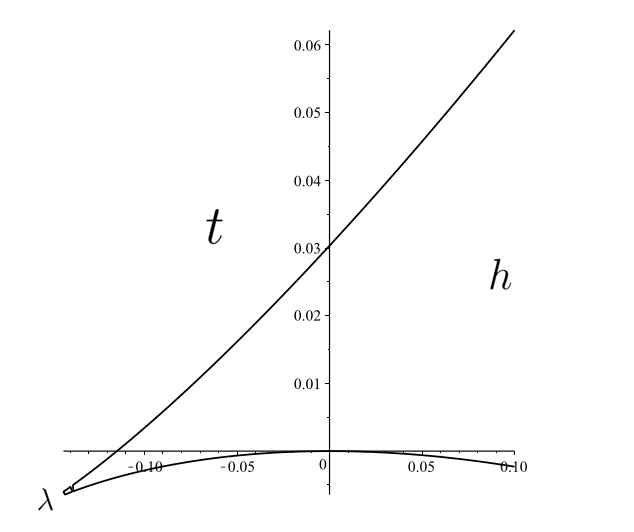}}}
    \caption{Zone E: The set $\Delta^{\sharp}$ for $a=-2$ and $b=-1$.} 
    %Picture on the right shows the enlarged portion near the origin.
    
   %        \end{center}
\label{fig6}
\end{figure}

\begin{tabular}{lclllcllc}
&domain $s$&&&domain $t$&&&domain $h$ \\ \\

15&$\sigma_{3,1}$ \, (0,1)&&19 , 20 & $ \sigma_{3,1}$ \, (0,3) , (2,1)  &\qquad \qquad \qquad&24&$\sigma_{3,1}$ \, (2,3)\\
16&$\sigma_{3,2}$ \, (0,1)&\qquad \qquad \qquad&21& $\sigma_{3,2}$ \, (2,1)&&25&$\sigma_{3,2}$ \, (2,3)   \\
17&$\sigma_{3,3}$ \, (1,0)&\qquad \qquad \qquad&22&$\sigma_{3,3}$ \, (1,2)&&26&$\sigma_{3,3}$ \, (1,4)    \\
18& $\sigma_{3,4}$ \, (1,0)&&23 , 28& $\sigma_{3,4}$ \, (1,2) , (3,0)&&27&$\sigma_{3,4}$ \, (3,2) \\ \\

\end{tabular} \medskip

%On Fig.~\ref{fig6} the self-intersection point $\phi$ is to the right while on
%Fig.~\ref{figEo} it is to the left of the $d$-axis. This is why 
%case 29 is present onlyon the first the second of these figures.

On Fig.~\ref{fig6}  the self-intersection point $\phi$ is to the right while on Fig.~\ref{figEo}  it is to the
left of the $d$-axis. This is why case 29 is present only on the second of these figures. On Fig.~\ref{fig6} there are two domains corresponding to case 23. If one compares Fig.~\ref{fig6} with Fig.~\ref{fig5} one sees that for $a=-2$, as $b$ increases from $-1$ to $-0.5$, the two domains of case 23 fuse in one single domain.
%---------------------------------------------------------------------------

\begin{figure}[H]
%\begin{center}
\vskip0.5cm
\centerline{\hbox{\includegraphics[scale=0.4]{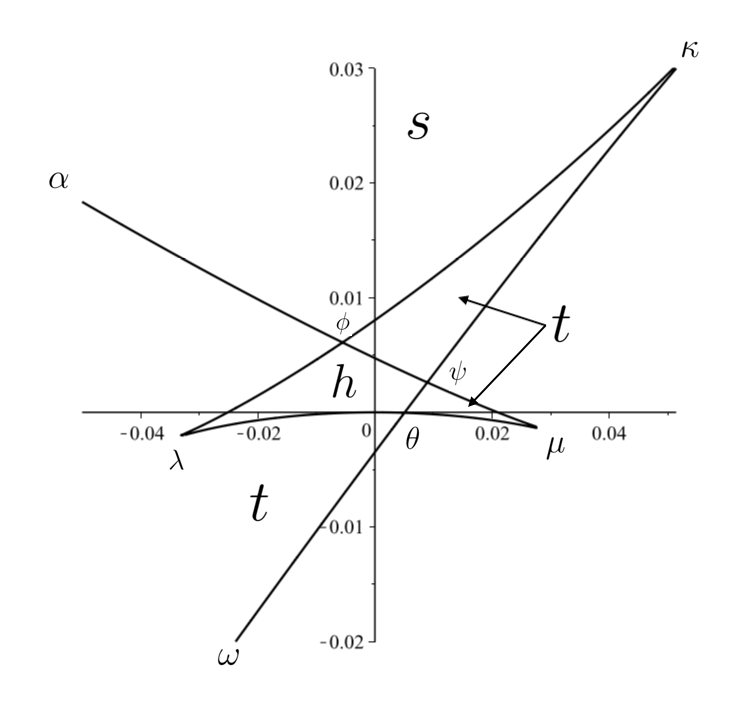}}}
    \caption{Zone E: The set $\Delta^{\sharp}$ for $a=-0.014$ and $b=-0.15$.} 
    %Picture on the right shows the enlarged portion near the origin.
    
   %        \end{center}
\label{figEo}
\end{figure}

\begin{tabular}{lclllcllc}
&domain $s$&&&domain $t$&&&domain $h$ \\ \\

15&$\sigma_{3,1}$ \, (0,1)&&19 , 20 & $ \sigma_{3,1}$ \, (0,3) , (2,1)  &\qquad \qquad \qquad&24&$\sigma_{3,1}$ \, (2,3)\\
16&$\sigma_{3,2}$ \, (0,1)&\qquad \qquad \qquad&21 , 29& $\sigma_{3,2}$ \, (2,1) , (0,3)&&25&$\sigma_{3,2}$ \, (2,3)   \\
17&$\sigma_{3,3}$ \, (1,0)&\qquad \qquad \qquad&22&$\sigma_{3,3}$ \, (1,2)&&26&$\sigma_{3,3}$ \, (1,4)    \\
18& $\sigma_{3,4}$ \, (1,0)&&23 , 28& $\sigma_{3,4}$ \, (1,2) , (3,0)&&27&$\sigma_{3,4}$ \, (3,2) \\ \\

\end{tabular} \medskip

%---------------------------------------------------------------------------

\begin{figure}[H]
%\begin{center}
\vskip0.5cm
\centerline{\hbox{\includegraphics[scale=0.4]{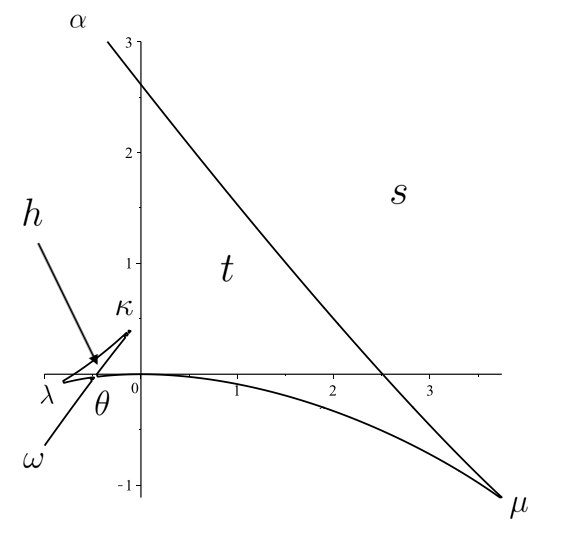}}\hskip1cm \hbox{\includegraphics[scale=0.4]{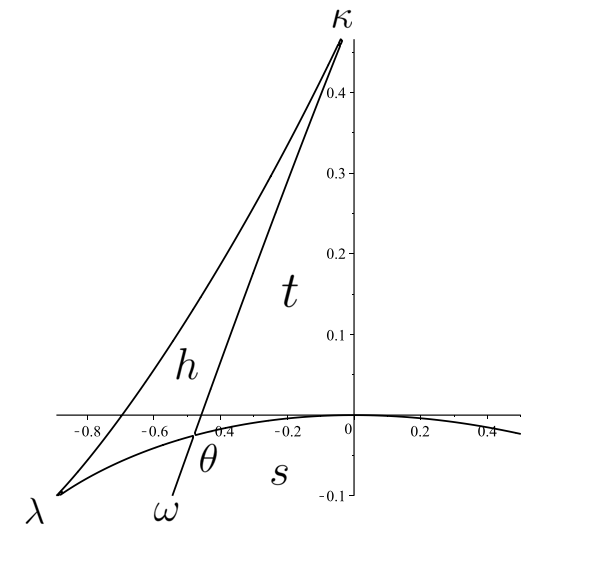}}}
    \caption{Zone F: The set $\Delta^{\sharp}$ for $a=-2$ and $b=-2.5$.} 
    %Picture on the right shows the enlarged portion near the origin. 
    
   %        \end{center}
\label{fig7}
\end{figure}

\begin{tabular}{lclllcllc}
&domain $s$&&&domain $t$&&&domain $h$ \\ \\

15&$\sigma_{3,1}$ \, (0,1) && 20&$ \sigma_{3,1}$ \, (2,1)&\qquad \qquad \qquad& \\
16&$\sigma_{3,2}$ \, (0,1)&\qquad \qquad \qquad& 21&$\sigma_{3,2}$ \, (2,1) &&25&$\sigma_{3,2}$ \, (2,3)  \\
17& $\sigma_{3,3}$ \, (1,0) &\qquad \qquad \qquad&22&$\sigma_{3,3}$ \, (1,2)&&26&$\sigma_{3,3}$ \, (1,4) \\
18&$\sigma_{3,4}$ \, (1,0) &\qquad \qquad \qquad&28&$\sigma_{3,4}$ \, (3,0)&&   \\  \\

\end{tabular} \medskip

The intersection of the domain $t$ with the second quadrant consists of two parts. In both of them one and the same case is realizable.

%---------------------------------------------------------------------------

\begin{figure}[H]
%\begin{center}
\vskip0.5cm
\centerline{\hbox{\includegraphics[scale=0.4]{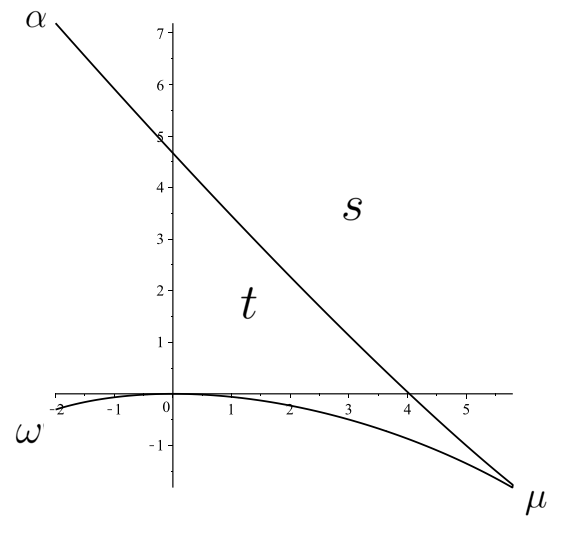}}}

    \caption{Zone G: The set $\Delta^{\sharp}$ for $a=-2$ and $b=-4$.}
  %     \end{center}
\label{fig8}
\end{figure}

\begin{tabular}{lclllcllc}
&domain $s$&&&domain $t$ \\ \\

15&$\sigma_{3,1}$ \, (0,1)&\qquad \qquad \qquad& 20&$ \sigma_{3,1}$ \, (2,1)\\
16&$\sigma_{3,2}$ \, (0,1) &\qquad \qquad \qquad&21&$\sigma_{3,2}$ \, (2,1)  \\
17& $\sigma_{3,3}$ \, (1,0)&\qquad \qquad \qquad&22&$\sigma_{3,3}$ \, (1,2)  \\
18& $\sigma_{3,4}$ \, (1,0)&\qquad \qquad \qquad&28&$\sigma_{3,4}$ \, (3,0)   \\ \\

\end{tabular} \medskip

When comparing Fig.~\ref{fig7} and Fig. ~\ref{fig8} it becomes clear that for $a=-2$ and for some $b=b_*\in (-4,-2.5)$, in the corresponding picture of the set $\Delta ^{\sharp}$, the domain $h$ intersects only the third, but not the second quadrant.

%---------------------------------------------------------------------------

\begin{figure}[H]
%\begin{center}
\vskip0.5cm
\centerline{\hbox{\includegraphics[scale=0.3]{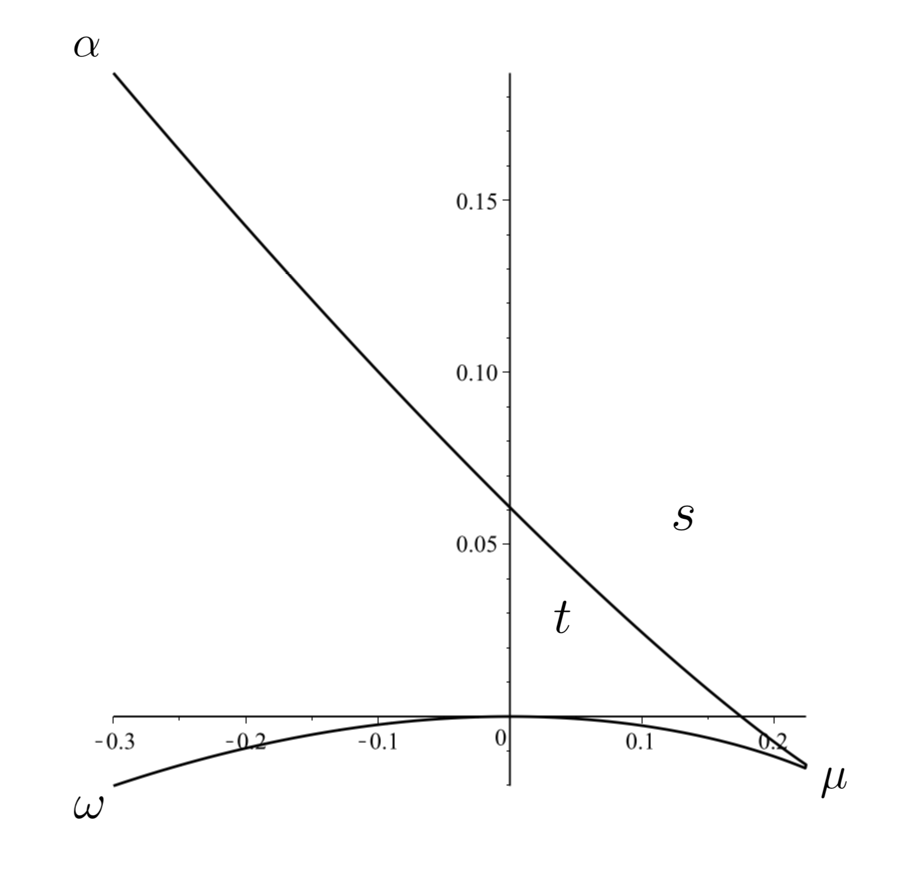}}}
    \caption{Zone H: The set $\Delta^{\sharp}$ for $a=1$ and $b=-1$. }
   % \end{center}
\label{fig11}
\end{figure}

\begin{tabular}{lclllcllc}
&domain $s$&&&domain $t$ \\ \\

30&$\sigma_{4,1}$ \, (0,1)&\qquad \qquad \qquad& 34&$\sigma_{4,1}$ \, (2,1)  \\
31&$\sigma_{4,2}$ \, (0,1) &\qquad \qquad \qquad&35&$\sigma_{4,2}$ \, (2,1)    \\
32&$\sigma_{4,3}$ \, (1,0)&\qquad \qquad \qquad&36&$\sigma_{4,3}$ \, (1,2)   \\
33&$\sigma_{4,4}$ \, (1,0)&\qquad \qquad \qquad&37&$\sigma_{4,4}$ \, (3,0)  \\ \\

\end{tabular} \medskip

%---------------------------------------------------------------------------

\begin{figure}[H]
%\begin{center}
\vskip0.5cm
\centerline{\hbox{\includegraphics[scale=0.32]{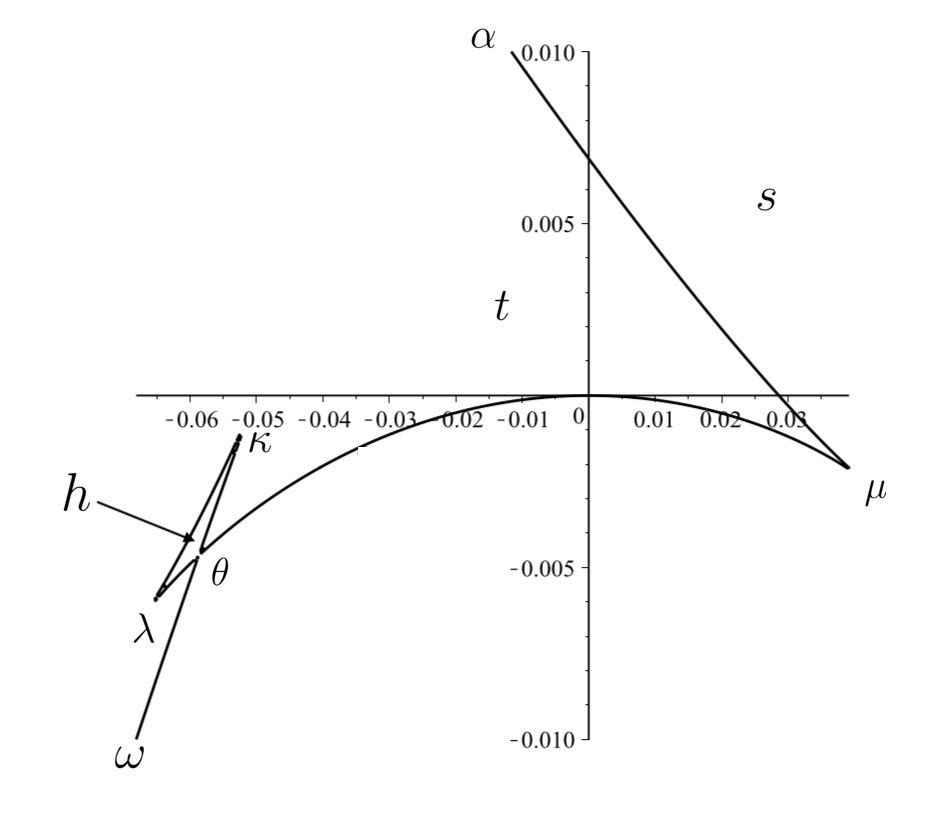}}}
    \caption{Zone I: The set $\Delta^{\sharp}$ for $a=0.05$ and $b=-0.2$. }
  %  \end{center}
\label{fig12}
\end{figure}

\begin{tabular}{lclllcllc}
&domain $s$&&&domain $t$&&&domain $h$ \\ \\

30&$\sigma_{4,1}$ \, (0,1)&\qquad \qquad \qquad&34&$ \sigma_{4,1}$ \, (2,1)&\qquad \qquad \qquad&     \\
31&$\sigma_{4,2}$ \, (0,1)&\qquad \qquad \qquad&35&$\sigma_{4,2}$ \, (2,1)  \\
32&$\sigma_{4,3}$ \, (1,0) &\qquad \qquad \qquad&36&$\sigma_{4,3}$ \, (1,2)&&38&$\sigma_{4,3}$ \, (1,4)  \\
33&$\sigma_{4,4}$ \, (1,0) &\qquad \qquad \qquad&37&$\sigma_{4,4}$ \, (3,0) \\ \\
%&domain $h$\\ \\
%
%$\sigma_{4,3}$&(1,4)  \\

\end{tabular} \medskip

%---------------------------------------------------------------------------

\begin{figure}[H]
%\begin{center}
\vskip0.5cm
\centerline{\hbox{\includegraphics[scale=0.3]{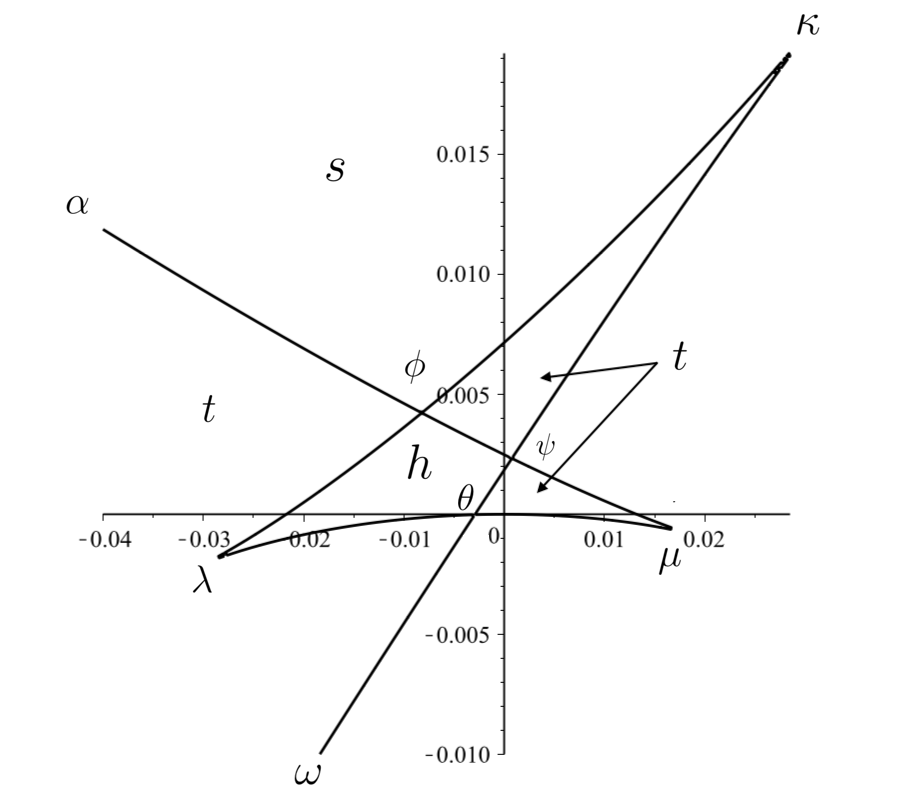}}}
    \caption{Zone J: The set $\Delta^{\sharp}$ for $a=0.05$ and $b=-0.12$. }
%    \end{center}
\label{fig13}
\end{figure}

\begin{tabular}{lclllcllc}
&domain $s$&&&domain $t$&&&domain $h$ \\ \\

30&$\sigma_{4,1}$ \, (0,1)&&34 , 39&$ \sigma_{4,1}$ \, (2,1) , (0,3) &\qquad \qquad \qquad &41&$\sigma_{4,1}$ \, (2,3) \\
31&$\sigma_{4,2}$ \, (0,1)&\qquad \qquad \qquad&35 , 40&$\sigma_{4,2}$ \, (2,1) , (0,3) &&42&$\sigma_{4,2}$ \, (2,3)  \\
32&$\sigma_{4,3}$ \, (1,0) &\qquad \qquad \qquad&36&$\sigma_{4,3}$ \, (1,2) &&38&$\sigma_{4,3}$ \, (1,4)  \\
33& $\sigma_{4,4}$ \, (1,0)&\qquad \qquad \qquad&37&$\sigma_{4,4}$ \, (3,0)  && \\ \\

\end{tabular} \medskip

By comparing Figures~\ref{fig12} and \ref{fig13} it becomes clear that for $a=0.05$ and 
for some
$b=b^{\flat}\in (-0.2, -0.12)$, the domain $h$ intersects
the second and the third, but not the first quadrant.

%---------------------------------------------------------------------------

\begin{figure}[H]
%\begin{center}
\vskip0.5cm
\centerline{\hbox{\includegraphics[scale=0.32]{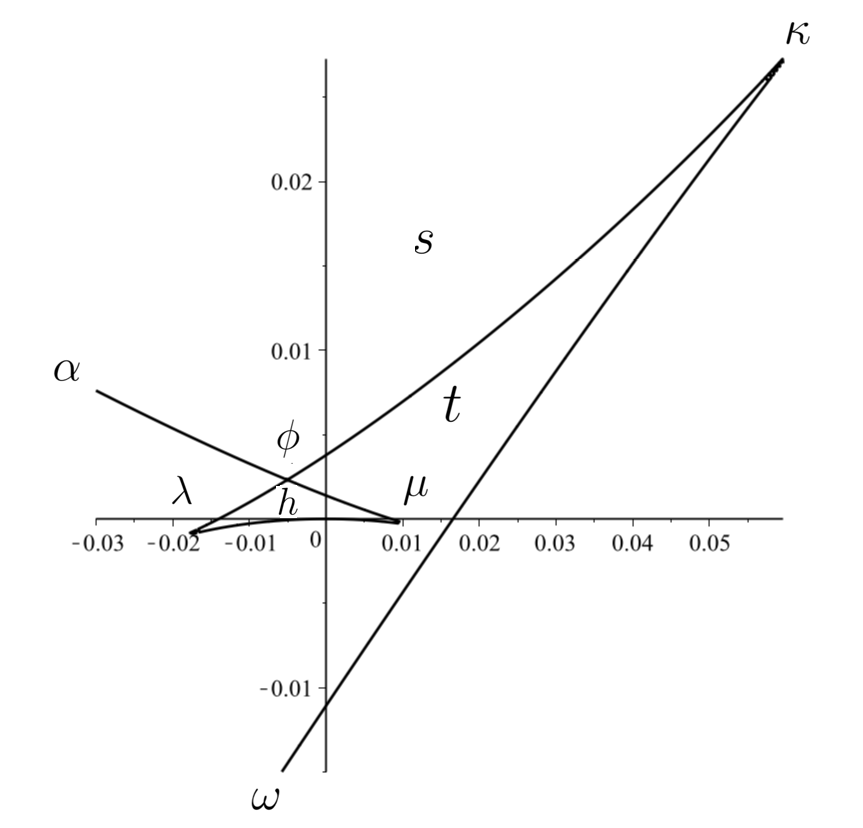}}}
    \caption{Zone K: The set $\Delta^{\sharp}$ for $a=0.05$ and $b=-0.09$. }
  %  \end{center}
\label{fig14}
\end{figure}

\begin{tabular}{lclllcllc}
&domain $s$&&&domain $t$&&&domain $h$ \\ \\

30&$\sigma_{4,1}$ \, (0,1)&&39&$ \sigma_{4,1}$ \, (0,3) &\qquad \qquad \qquad&41&$\sigma_{4,1}$ \, (2,3)   \\
31&$\sigma_{4,2}$ \, (0,1)&\qquad \qquad \qquad&35 , 40&$\sigma_{4,2}$ \, (2,1) , (0,3)&&42&$\sigma_{4,2}$ \, (2,3)   \\
32&$\sigma_{4,3}$ \, (1,0) &\qquad \qquad \qquad&36&$\sigma_{4,3}$ \, (1,2)&&38&$\sigma_{4,3}$ \, (1,4)   \\
33&$\sigma_{4,4}$ \, (1,0) &\qquad \qquad \qquad&43&$\sigma_{4,4}$ \, (1,2) &&44&$\sigma_{4,4}$ \, (3,2)  \\ \\

\end{tabular} \medskip

%---------------------------------------------------------------------------

\begin{figure}[H]
%\begin{center}
\vskip0.5cm
\centerline{\hbox{\includegraphics[scale=0.32]{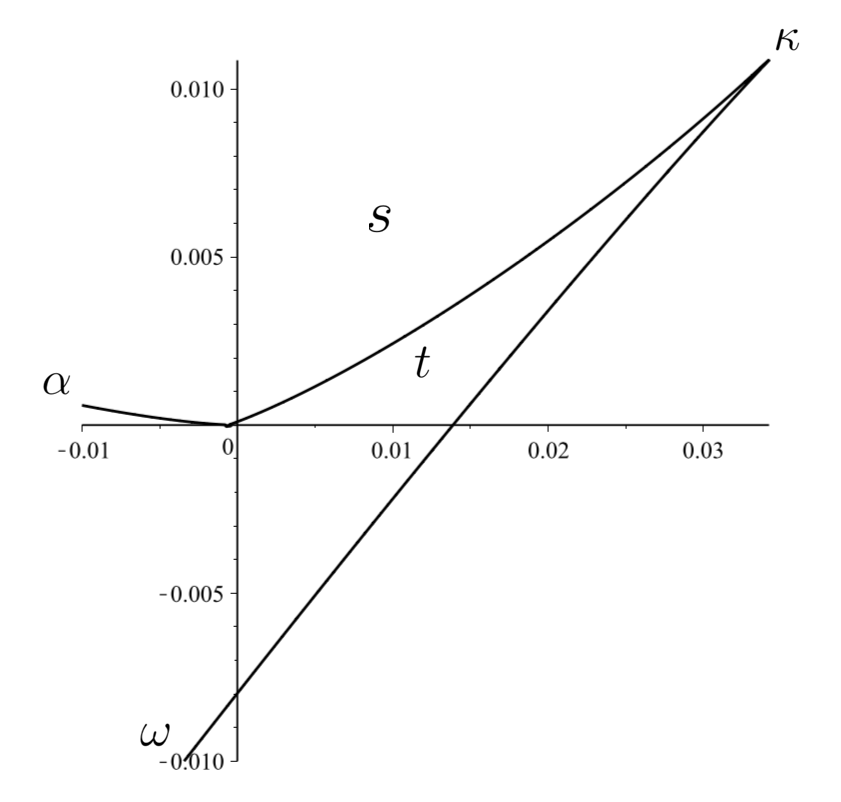}}\hskip1cm \hbox{\includegraphics[scale=0.32]{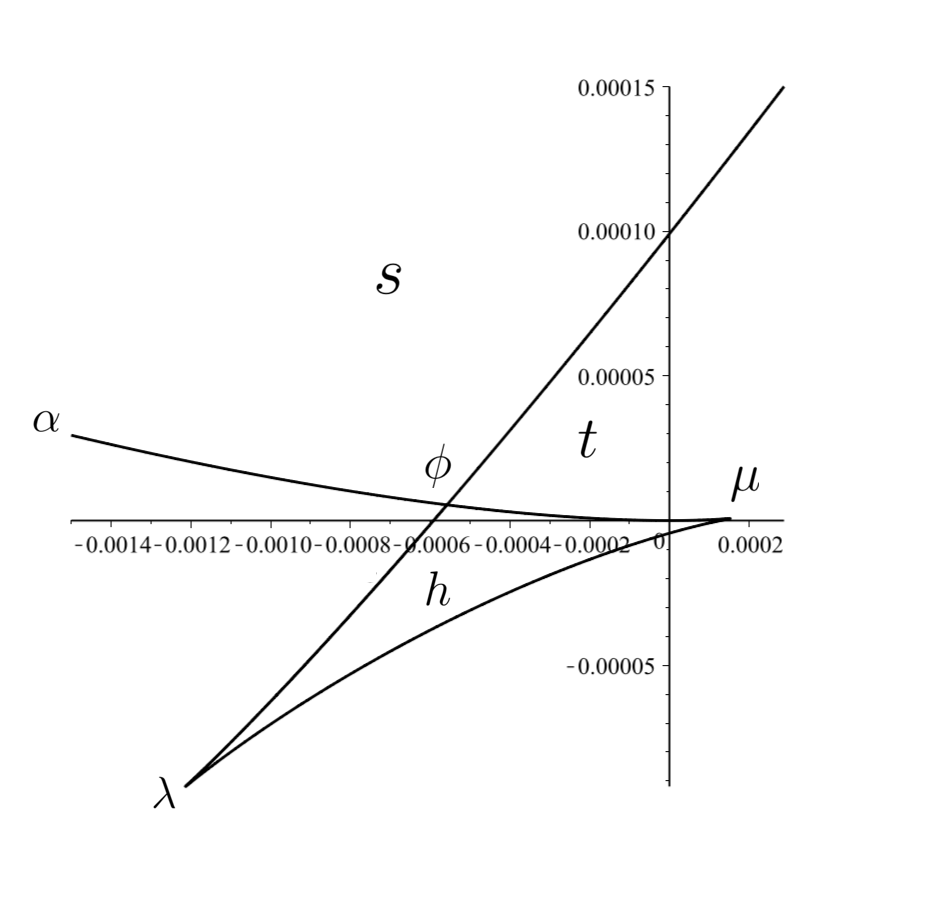}}}
    \caption{Zone L: The set $\Delta^{\sharp}$ for $a=0.22$ and $b=0.01$.} 
    %Picture on the right shows the enlarged portion near the origin.
    
   %        \end{center}
\label{fig15}
\end{figure}

\begin{tabular}{lclllcllc}
&domain $s$&&&domain $t$&&&domain $h$ \\ \\

45&$\sigma_{1,1}$ \, (0,1)&&49&$ \sigma_{1,1}$ \, (0,3) &\qquad \qquad \qquad&54&$\sigma_{1,1}$ \, (0,5)  \\
 46&$\sigma_{1,2}$ \, (0,1)&\qquad \qquad \qquad&50 , 51&$\sigma_{1,2}$ \, (0,3) , (2,1)&&55&$\sigma_{1,2}$ \, (2,3) \\
47& $\sigma_{1,3}$ \, (1,0) &\qquad \qquad \qquad&52&$\sigma_{1,3}$ \, (1,2) &&56&$\sigma_{1,3}$ \, (1,4)  \\
48&$\sigma_{1,4}$ \, (1,0)&\qquad \qquad \qquad&53&$\sigma_{1,4}$ \, (1,2)&&57&$\sigma_{1,4}$ \, (1,4)   \\

\end{tabular} \medskip

%---------------------------------------------------------------------------

\begin{figure}[H]
%\begin{center}
\vskip0.5cm
\centerline{\hbox{\includegraphics[scale=0.32]{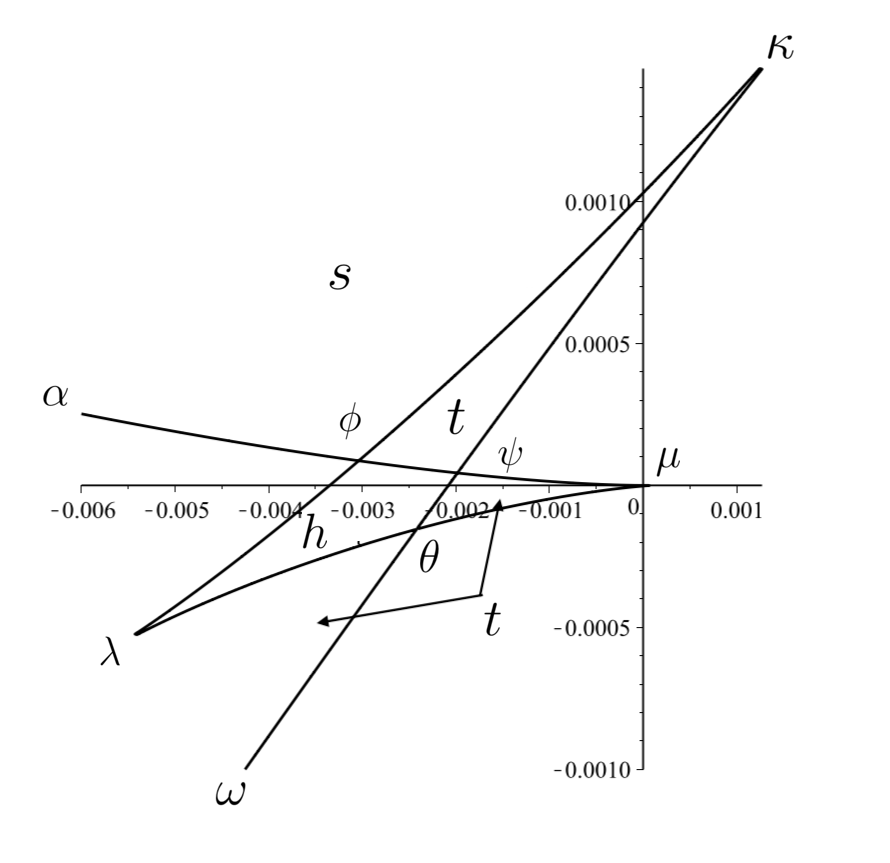}}\hskip1cm \hbox{\includegraphics[scale=0.32]{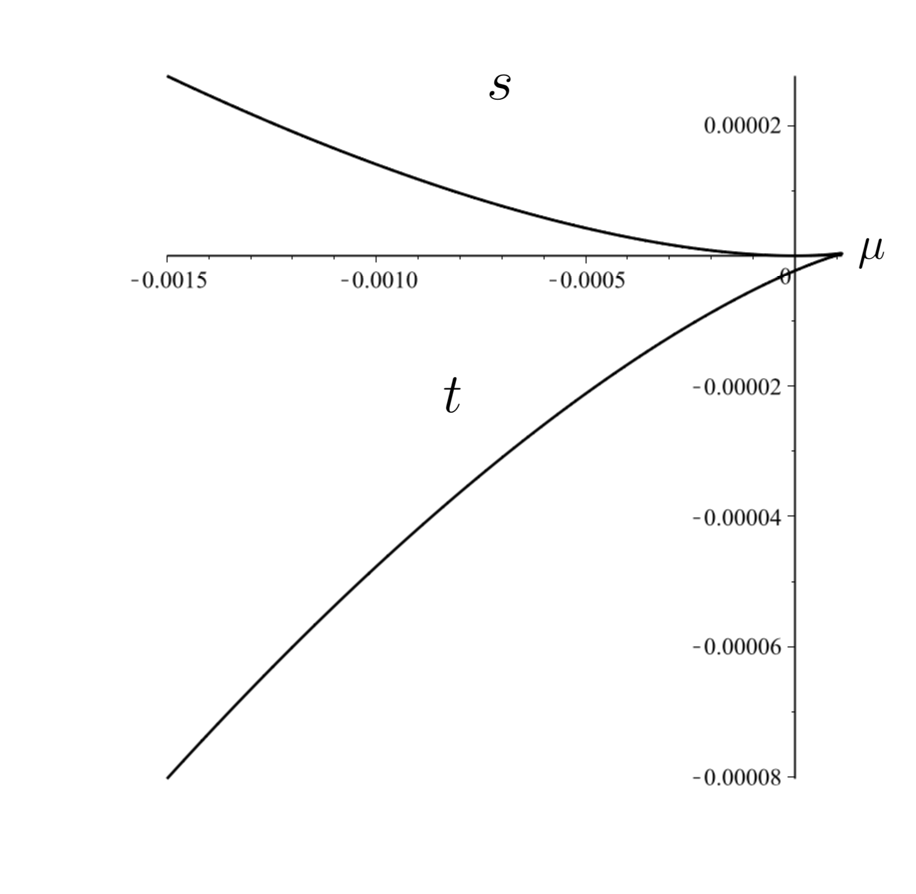}}}
    \caption{Zone M: The set $\Delta^{\sharp}$ for $a=0.28$ and $b=0.01$.} 
    %Picture on the right shows the enlarged portion near the origin. 
    
%           \end{center}
\label{fig16}
\end{figure}

\begin{tabular}{lclllcllc}
&domain $s$&&&domain $t$&&&domain $h$ \\ \\

45&$\sigma_{1,1}$ \, (0,1)&\qquad \qquad \qquad&49&$ \sigma_{1,1}$ \, (0,3) &\qquad \qquad \qquad&   \\
46&$\sigma_{1,2}$ \, (0,1)&\qquad \qquad \qquad&50 , 51&$\sigma_{1,2}$ \, (0,3) , (2,1)&&55&$\sigma_{1,2}$ \, (2,3)  \\
47&$\sigma_{1,3}$ \, (1,0)&\qquad \qquad \qquad&52&$\sigma_{1,3}$ \, (1,2) &&56&$\sigma_{1,3}$ \, (1,4)  \\
48& $\sigma_{1,4}$ \, (1,0)&\qquad \qquad \qquad&53& $\sigma_{1,4}$ \, (1,2)&&  \\ \\

\end{tabular} \medskip

%---------------------------------------------------------------------------

\begin{figure}[H]
%\begin{center}
\vskip0.5cm
\centerline{\hbox{\includegraphics[scale=0.3]{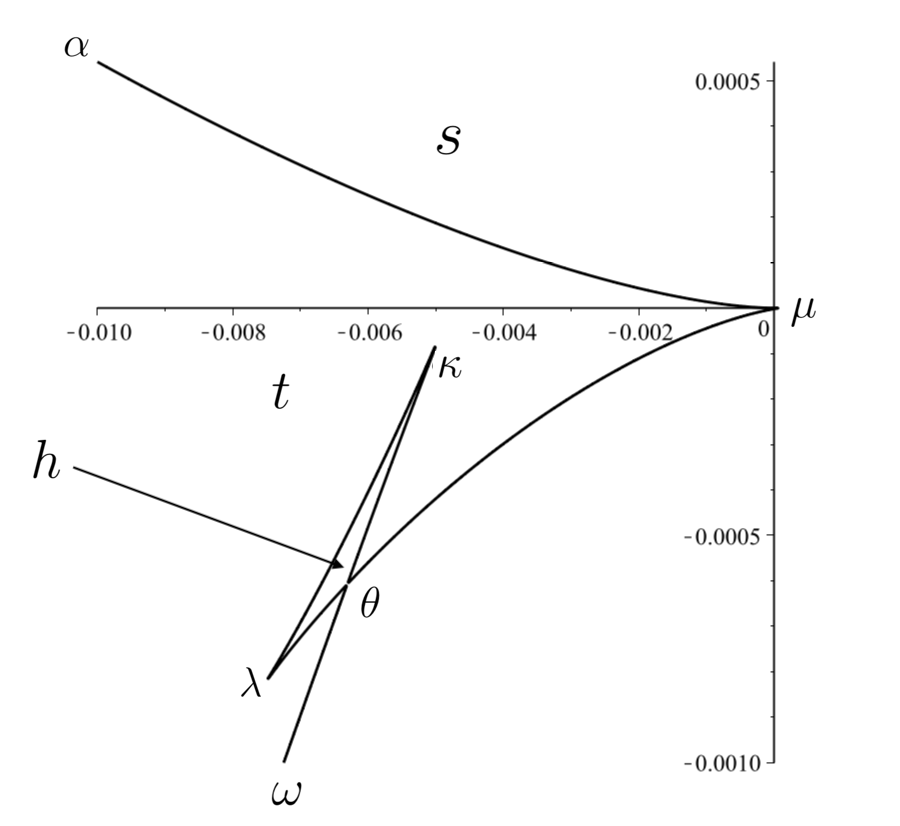}}\hskip1cm \hbox{\includegraphics[scale=0.3]{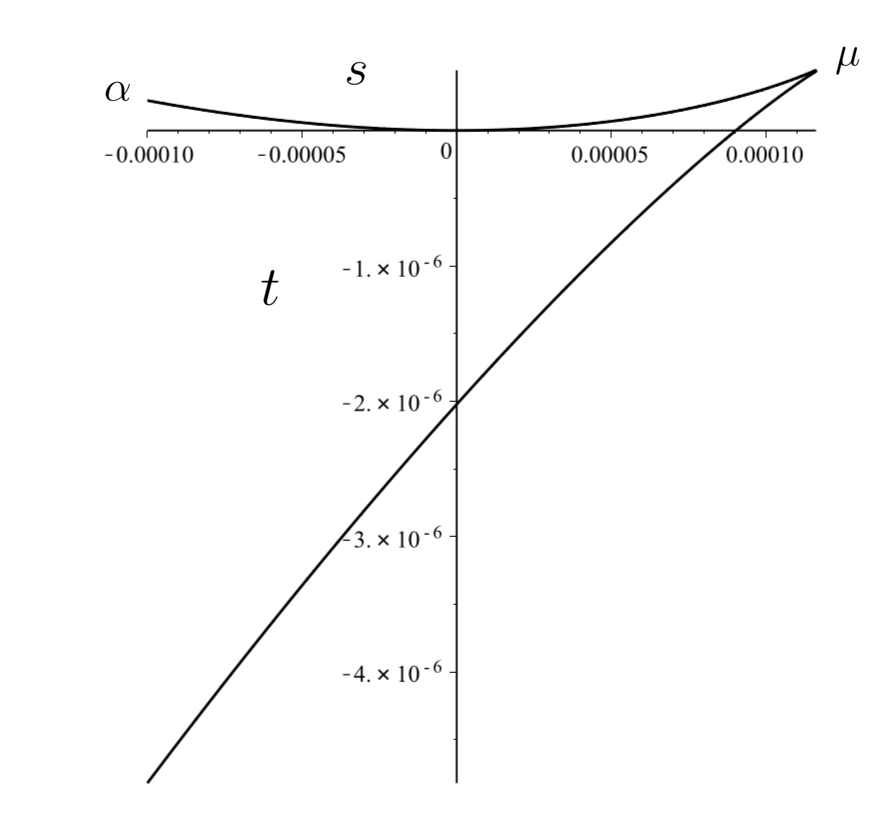}}}
    \caption{Zone N: The set $\Delta^{\sharp}$ for $a=0.295$ and $b=0.01$.}
    %Picture on the right shows the enlarged portion near the origin. 
    
      %     \end{center}
\label{fig17}
\end{figure}

\begin{tabular}{lclllcllc}
&domain $s$&&&domain $t$&&&domain $h$ \\ \\

45&$\sigma_{1,1}$ \, (0,1)&\qquad \qquad \qquad&49&$\sigma_{1,1}$ \, (0,3)&\qquad \qquad \qquad \\
46& $\sigma_{1,2}$ \, (0,1)&\qquad \qquad \qquad&51&$\sigma_{1,2}$ \, (2,1)  \\
47&$\sigma_{1,3}$ \, (1,0)&\qquad \qquad \qquad&52&$\sigma_{1,3}$ \, (1,2) &&56&$\sigma_{1,3}$ \, (1,4)  \\
48&$\sigma_{1,4}$ \, (1,0)&\qquad \qquad \qquad&53&$\sigma_{1,4}$ \, (1,2)   \\ \\
%&domain $h$\\ \\
%
%$\sigma_{1,3}$&(1,4)  \\

\end{tabular} \medskip

When comparing Figures \ref{fig16} and \ref{fig17} it becomes clear that for $b=0.01$, 
there exist two values
$0.28<a_{\dagger}<a_{\natural}<0.295$ of $a$ such that for 
$a=a_{\dagger}$, the cusp $\kappa$ is in the
second quadrant, but still above the arc $\mu \alpha$ (so the 
self-intersection points $\phi$ and $\psi$
exist), and for $a=a_{\natural}$, the cusp point $\kappa$ is in the 
second quadrant and
under the arc $\mu \alpha$.

%---------------------------------------------------------------------------

\begin{figure}[H]

%\begin{center}
\vskip0.5cm
\centerline{\hbox{\includegraphics[scale=0.3]{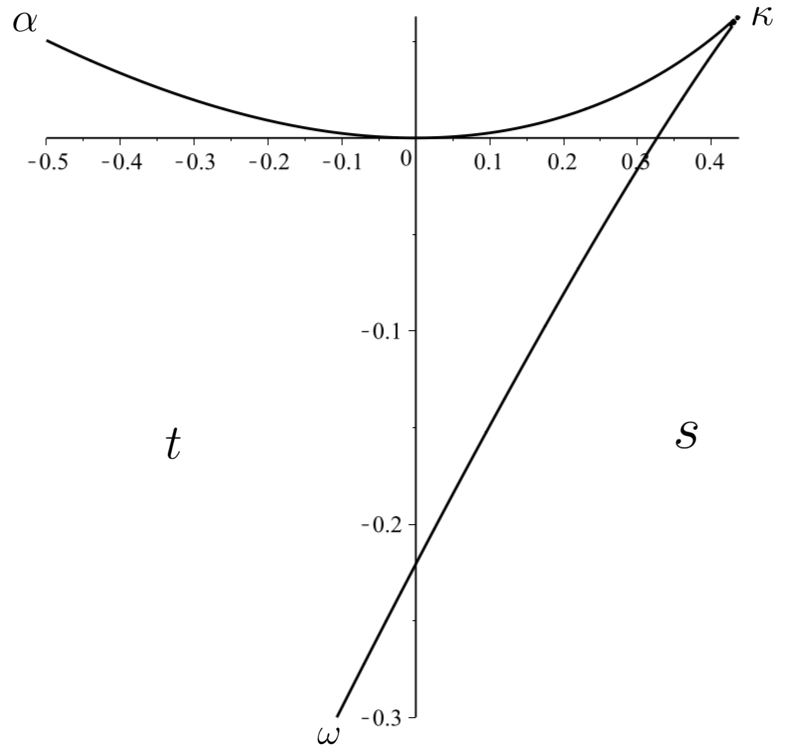}}}
    \caption{Zone P: The set $\Delta^{\sharp}$ for $a=1$ and $b=1$. }
   % \end{center}
\label{fig10}
\end{figure}

\begin{tabular}{lclllcllc}
&domain $s$&&&domain $t$&&& \\ \\

45&$\sigma_{1,1}$ \, (0,1)&\qquad \qquad \qquad&49&$ \sigma_{1,1}$ \, (0,3) \\
46&$\sigma_{1,2}$ \, (0,1)&\qquad \qquad \qquad&51&$\sigma_{1,2}$ \, (2,1)   \\
47&$\sigma_{1,3}$ \, (1,0) &\qquad \qquad \qquad& 52&$\sigma_{1,3}$ \, (1,2)\\
48&$\sigma_{1,4}$ \, (1,0)&\qquad \qquad \qquad&53&$\sigma_{1,4}$ \, (1,2)  \\  \\

\end{tabular} \medskip

%---------------------------------------------------------------------------


\begin{thebibliography}{Dillo 83}

\bibitem{AlFu} A.~Albouy and Y.~Fu, Some remarks about Descartes' rule 
of signs. Elem. Math., 69 (2014), 186--194. Zbl 1342.12002, 
MR3272179.

\bibitem{Arnold} V. I. Arnold, S. M. Gusein-Zade and A. N. Varchenko, Singularities of Differentiable Maps, Birkh\"auser; 2012 edition (May 24, 2012). 

%\bibitem{AJS} B.~Anderson, J.~Jackson, M.~Sitharam: Descartes' rule of 
%signs revisited. Am. Math. Mon. 105 (1998), 447--451. 
%Zbl 0913.12001, MR1622513

%\bibitem{Ga} C.~F.~Gauss: Beweis eines algebraischen Lehrsatzes. J. Reine Angew. Math. 3 (1828) 1--4; Werke 3, 67--70, G\"ottingen, 1866. ERAM 003.0089cj, MR1577673

\bibitem{YVH2s} H.~Cheriha, Y.~Gati and V.~P.~Kostov, A nonrealization theorem in the context of Descartes' rule of signs, St. Kliment Ohridski - Faculty of Mathematics and Informatics (accepted).

%\bibitem{YVHsap} H.~Cheriha, Y.~Gati and V.~P.~Kostov: Descartes' rule of signs, Rolle's theorem and sequences of admissible pairs (submitted).



\bibitem{YVHd9} H.~Cheriha, Y.~Gati and V.~P.~Kostov, On Descartes' rule for polynomials with two variations of signs (submitted).

\bibitem{Che} B.~Chevallier, Courbes maximales de Harnack et discriminant. (French) [Maximal Harnack curves and the discriminant] S\'eminaire sur la g\'eom\'etrie alg\'ebrique r\'eelle, Tome I, II, 41-65, Publ. Math. Univ. Paris VII, 24, Univ. Paris VII, Paris, 1986.







\bibitem{FoKoSh} J.~Forsg\aa rd, B.~Shapiro and V.~P.~Kostov, 
Could Ren\'e Descartes have known this?  Exp. Math. 24 (4)  
(2015), 
438-448. Zbl 1326.26027, MR3383475.

\bibitem{corVS} J.~Forsg\aa rd, B.~Shapiro and V.~P.~Kostov, 
Corrigendum: Could Ren\'e Descartes have known this? Exp. Math. 28 (2) (2019), 255-256. 10.1080/10586458.2017.1417775.

\bibitem{Fo}  J.~Fourier, Sur l'€™usage du th\'eor\`eme de Descartes dans 
la recherche
des limites des racines. Bulletin des sciences par la Soci\'et\'e philomatique
de Paris (1820) 156--165, 181--187; {\oe}uvres 2,  291--309, Gauthier - Villars, 1890.

\bibitem{Gr} D.~J.~Grabiner, Descartes' Rule of Signs: 
Another Construction. Am. Math. Mon. 106 (1999), 854--856.
Zbl 0980.12001, MR1732666.


\bibitem{VJ} V.~Jullien, Descartes La "Geometrie" de 1637.

%\bibitem{Ko} V.~P.~Kostov: Descartes' rule of signs and moduli of roots, Publicationes Mathematicae Debrecen 96 (2020) 1-2. 

\bibitem{VMJ} V.~P.~Kostov, On a stratification defined by real roots of polynomials, 
Serdica
Mathematical Journal 29:2 (2003) 177-186.

\bibitem{Czechoslovak} V.~P.~Kostov, On realizability of sign patterns 
by real polynomials, Czechoslovak Math. J. 68 (2018) 853-874.
%
%\bibitem{KoMB} V.~P.~Kostov: Polynomials, sign patterns and Descartes' rule of signs, Mathematica Bohemica, vol. 144 (2019), issue 1, pp. 39-67.






\bibitem{KostovShapiroActaUMB} V.~P.~Kostov and B.~Shapiro, Polynomials, sign patterns and Descartes' 
rule, Acta Universitatis Matthiae Belii, series Mathematics
Issue 2019, 1-11, ISSN 1338-7111.


\bibitem{Me} I. M\'eguerditchian, Thesis - G\'eom\'etrie du discriminant r\'eel et des polyn\^omes hyperboliques, thesis defended in 1991 at the University Rennes 1.









\bibitem{Pos} T.~Poston and I.~Stewart, Catastrophe theory and its applications. With an appendix by D. R. Olsen, S. R. Carter and A. Rockwood. Surveys and Reference Works in Mathematics, No. 2. Pitman, London-San Francisco, Calif.-Melbourne: distributed by Fearon-Pitman Publishers, Inc., Belmont, Calif., 1978. xviii+491 pp. ISBN: 0-273-01029-8.



%\bibitem{AJS} B.~Anderson, J.~Jackson, M.~Sitharam: Descartes' rule of signs revisited. Am. Math. Mon. 105 (1998), 447--451. Zbl 0913.12001, MR1622513

%\bibitem{JensSkript} J.~Forsg\aa rd, Discriminant amoebas and lopsidedness, 
%submitted.

%\bibitem{JensSkript1} J.~Forsg\aa rd, people.su.se/~jefo7364/files/1501
%Descartes.nb.

%\bibitem{Ca} F.~Cajori: A history of the arithmetical methods of approximation to the roots of numerical equations of one unknown quantity. Colorado College Publication, Science Series 12--7 (1910), 171--215.

%\bibitem{Fo} J.~Fourier: Sur l'usage du th\'eor\`eme de Descartes dans la recherche des limites des racines. Bulletin des sciences par la Soci\'et\'e philomatique de Paris (1820) 156--165, 181--187; {\oe}uvres 2,  291--309, Gauthier-Villars, 1890.







%\bibitem{KhS} B.~Khesin and B.~Shapiro:  Swallowtails and 
%Whitney umbrellas are homeomorphic.   
%J. Algebr. Geom. 1 (4) (1992),  549--560. Zbl 0790.57019, MR1174901

%\bibitem{Ko} V.~P.~Kostov: Topics on hyperbolic polynomials in one variable. 
%Panor. Synth. 33 (2011), vi + 141 p. SMF. Zbl 1259.12001, MR2952044



%Panor. Synth. 33 (2011), vi + 141 p. SMF. Zbl 1259.12001, MR2952044



%\bibitem {Ko} V.~P.~Kostov, A realization theorem about D-sequences. 
%C. R. Acad. Bulgare Sci. 60 (2007), no. 12, 1255--1258.

%\bibitem {PRS} M.~Passare, M.~Rojas, and B.~Shapiro,   New multiplier 
%sequences via discriminant amoebae,   Moscow Math. J. vol. 11(3), 
%July-September 2011, 547--560.
%


\end{thebibliography}
\end{document}